\DeclareMathOperator{\Map}{Map}
\newcommand{\bfMap}{\bm{\mathrm{Map}}}
\DeclareMathOperator{\Ho}{Ho}
\DeclareMathOperator{\Hom}{Hom}
\DeclareMathOperator{\Aut}{Aut}
\DeclareMathOperator{\RHom}{\mathbb{R}\underline{Hom}}
\DeclareMathOperator{\RPerf}{\mathbb{R}Perf}
\DeclareMathOperator{\RSpec}{\mathbb{R}Spec}
\DeclareMathOperator{\hocolim}{hocolim}
\DeclareMathOperator{\holim}{holim}
\DeclareMathOperator{\colim}{colim}
\DeclareMathOperator{\ext}{ext}
\DeclareMathOperator{\Ext}{Ext}
\DeclareMathOperator{\supp}{supp}
\DeclareMathOperator{\ch}{ch}
\DeclareMathOperator{\td}{td}
\DeclareMathOperator{\Spec}{Spec}
\DeclareMathOperator{\Perf}{Perf}
\DeclareMathOperator{\Coh}{Coh}
\DeclareMathOperator{\QCoh}{QCoh}
\DeclareMathAlphabet\mathbfcal{OMS}{cmsy}{b}{n}  
\DeclareMathAlphabet{\mathpzc}{OT1}{pzc}{m}{it}  
\renewcommand{\lim}{\varprojlim}
\renewcommand{\H}{\mathrm{H}}
\newcommand{\calH}{\mathcal{H}}
\renewcommand{\P}{\mathbb{P}}
\newcommand{\id}{\mathrm{id}}
\newcommand{\Z}{\mathbb{Z}}
\newcommand{\Q}{\mathbb{Q}}
\newcommand{\R}{\mathbb{R}}
\newcommand{\C}{\mathbb{C}}
\newcommand{\G}{\mathbb{G}}
\renewcommand{\L}{\mathbb{L}}
\newcommand{\A}{\mathcal{A}}
\newcommand{\bfcalC}{\mathbfcal{C}}
\renewcommand{\O}{\mathcal{O}}
\newcommand{\calP}{\mathcal{P}}
\newcommand{\rmD}{\mathrm{D}}
\newcommand{\rmL}{\mathrm{L}}
\newcommand{\scrM}{\mathscr{M}}
\let\phi\varphi
\renewcommand{\epsilon}{\varepsilon}
\mathchardef\dsh="2D
\newcommand{\catname}[1]{{\normalfont\textbf{#1}}}
\newcommand{\sSet}{\catname{sSet}}
\newcommand{\CAlg}{\catname{CAlg}}
\newcommand{\sCAlg}{\catname{sCAlg}}
\newcommand{\Ch}[1]{\catname{Ch(}#1\catname{)}}
\newcommand{\dgCh}[1]{\catname{Ch}_{dg}\catname{(}#1\catname{)}}
\newcommand{\Mod}{\catname{-Mod}}
\newcommand{\dgCat}{\catname{dgCat}}
\newcommand{\dgVect}{\catname{dgVect}}
\newcommand{\cdgAlg}{\catname{cdgAlg}}
\newcommand{\dSt}{\catname{dSt}}
\newcommand{\Lotimes}{\otimes^\mathbb{L}}
\newcommand{\wh}{\widehat}
\newcommand{\whperf}[1]{{\wh{#1}}_{perf}}
\newcommand{\nfrac}{\nicefrac}
\newtheorem{prop}{Proposition}
\newtheorem{dfn}{Definition}
\newtheorem{thm}{Theorem}
\newtheorem{lemma}{Lemma}
\newtheorem*{thm*}{Theorem}
\theoremstyle{remark}
\newtheorem{rmk}{Remark}
\newtheorem*{ex}{Example}
\author{Martino Cantadore}
\title{Dg categories of cubic fourfolds}
\date{\today}
\begin{document}
\maketitle

\begin{abstract}
We prove a reconstruction theorem à la Calabrese-Groechenig for the moduli space parametrizing skyscraper sheaves on a smooth projective variety when these are considered as a system of points in the dg category of perfect complexes on the variety, as axiomatized by To\"en and Vaquié. This result is then used to show that, for a cubic fourfold $Y\subset \P^5_\C$, the Kuznetsov category $\A_Y$ is geometric (possibly twisted) if and only if a dg enhancement $T_Y$ of $\A_Y$ admits a system of points whose associated moduli space is a (possibly twisted) K3 surface.
\end{abstract}

\section*{Introduction}
In recent years some interest has grown around the possibility of studying rationality problems of varieties by means of derived category techniques. One of the most stunning approaches is the cautious conjecture made by Kuznetsov \cite{kuznetsov} stating that a smooth cubic hypersurface $Y\subset \P^5_\C$ is rational precisely if the subcategory $\mathcal{A}_Y$ appearing in the semi-orthogonal decomposition (see subsection \ref{cfdaK3} for the precise definition)
\[
\rmD^b(\Coh Y)=\langle \A_Y,\O_Y,\O_Y(1),\O_Y(2)\rangle \qquad\A_Y:=\langle \O_Y,\O_Y(1),\O_Y(2)\rangle^\bot
\]
is equivalent to the derived category of coherent sheaves over a K3 surface.

The category $\A_Y$ is an instance of what some people call \emph{noncommutative variety}, that is a $k$-linear triangulated category with a Serre functor and such that the $\Hom$ spaces between its objects are finite-dimensional. In this mindset we should do geometry by studying, instead of varieties themselves, triangulated categories which have the same properties of derived categories of coherent sheaves over varieties. In this case $\A_Y$ is an example of a K3 category because its Serre functor is the double shift like in the derived category of a K3 surface.

The rough idea behind this paper is that we would like to treat the triangulated category $\mathcal{A}_Y$ as a category of geometric origin: to do that we  first take a dg enhancement of it, that is we see it as a triangulated trace of a saturated dg category $T$, and we can then use the results of To\"en and Vaquié \cite{toenvaquie} to produce a derived moduli stack $\R\scrM_T$ which classifies perfect (i.e. compact) objects in $T$. The next step would be looking inside this moduli stack to find a suitable geometric space which would be our deputee K3 surface.

K3 surfaces have non-trivial Fourier-Mukai partners, so that there is no possibility to reconstruct such a K3 surface $S$ from the datum of its derived category alone, nevertheless the construction mentioned above gives us back the surface (and in fact any smooth projective variety) when we apply it in the case $\A_Y\simeq \rmD^b(S)$ because we are given a canonical morphism from $S$ to the moduli space of perfect complexes over $S$ as the unit of an adjunction (and in particular we use the t-structure on the derived category).
Our approach to the reconstruction follows the paper \cite{toenvaquiepoints} where the authors axiomatize what a point in a dg category is and construct a moduli stack $\scrM_\calP\subset \scrM_T$ of point-like objects in a given dg category for a given system of points $\calP$ (see section \ref{secpts} for a resumé of their work). The moduli stack of point-like objects results naturally to be a $\G_m$-gerbe over the coarse moduli space $M_\calP$.

In the same paper the authors claim that skyscrapers sheaves on a smooth projective variety constitute a system of points and that a smooth projective variety can be reconstructed starting from such data, in this work we provide a proof of this fact and apply this reconstruction procedure to get the following result: 
\begin{thm*}[see Theorem \ref{thm} and Proposition \ref{mtpt0}]
Let $Y\subset \P^5$ be a cubic fourfold, $\A_Y$ the admissible triangulated subcategory in the semi-orthogonal decomposition of $\rmD^b(Y)$ as above and $T_Y$ a dg enhancement of the latter. The following are equivalent:
\begin{enumerate}
\item There exists a twisted K3 surface $(S,\alpha)$ such that $\rmD^b(S,\alpha)\simeq \A_Y$;
\item There exists a system of points $\calP$ of dimension two in $T_Y$ strongly co-generating a t-structure on $T_Y$ and $M_\calP$ is a K3 surface.
\end{enumerate}
In particular the K3 is untwisted (i.e. $\alpha=1$) if and only if $\scrM_\calP$ is the trivial $\G_m$-gerbe on $M_\calP$ and in this case there exists an open immersion $S \hookrightarrow M_T^{pt,0}$.
\end{thm*}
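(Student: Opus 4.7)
The plan is to combine the paper's own reconstruction theorem (that skyscrapers on a smooth projective variety form a system of points whose coarse moduli recovers the variety) with the Toën–Vaquié correspondence between systems of points, $\G_m$-gerbes, and twisted derived categories. In both directions I produce a Fourier–Mukai-type functor whose properties are controlled by the axioms of a strongly co-generating system of points.

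For (1) $\Rightarrow$ (2), fix a quasi-equivalence of dg enhancements $T_Y \simeq T_{(S,\alpha)}$, where $T_{(S,\alpha)}$ denotes a dg enhancement of $\rmD^b(S,\alpha)$. The (twisted variant of the) reconstruction theorem applies to $(S,\alpha)$: the $\alpha$-twisted skyscrapers form a system of points $\calP$ of dimension $\dim S = 2$, they strongly co-generate the standard t-structure on $T_{(S,\alpha)}$, and the associated coarse moduli satisfies $M_\calP \cong S$, which is a K3 surface by hypothesis. Transporting $\calP$ along the quasi-equivalence yields the required data in $T_Y$.

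For (2) $\Rightarrow$ (1), the Toën–Vaquié formalism attaches to $\calP$ a $\G_m$-gerbe $\scrM_\calP \to M_\calP$, hence a Brauer class $\alpha \in \mathrm{Br}(M_\calP)$, together with a universal $\alpha$-twisted family $\mathcal{E}$ of point-like objects of $T_Y$ parametrized by $\scrM_\calP$. This produces a Fourier–Mukai dg functor
\[
\Phi_{\mathcal{E}}\colon T_Y \longrightarrow \rmD^b(M_\calP,\alpha^{-1}).
\]
Fully faithfulness is an $\Ext$-computation which, via the two defining axioms of a system of points (orthogonality and the identification of self-$\Ext$'s with the exterior algebra of the cotangent space at a point), reduces to the standard $\Ext$-computation between twisted skyscrapers on $M_\calP$. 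Essential surjectivity is where the strong co-generation hypothesis pays off: it forces the image of $\Phi_{\mathcal{E}}$ to generate a t-structure on $T_Y$ whose heart maps isomorphically onto $\Coh(M_\calP,\alpha^{-1})$, after which t-exactness and Beilinson-type arguments promote this to an equivalence on bounded derived categories. Since $M_\calP$ is a K3, one obtains $\A_Y \simeq \rmD^b(M_\calP,\alpha^{-1})$.

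For the refinement, $\alpha$ is trivial in $\mathrm{Br}(M_\calP)$ precisely when the $\G_m$-gerbe $\scrM_\calP \to M_\calP$ admits a section, i.e. is the trivial gerbe; this is a direct application of the standard identification of Brauer classes with $\G_m$-gerbes. In the untwisted case, the skyscraper system $\calP$ on $S \simeq M_\calP$ transports to a family of objects of $T_Y$ landing in $M_T^{pt,0}$; this family defines a morphism $S \to M_T^{pt,0}$, and its being an open immersion follows because skyscrapers are simple and their deformation theory is unobstructed of dimension $2 = \dim T_s S$, so the map is étale and injective on points. The main technical obstacle I expect is controlling the essential image of $\Phi_{\mathcal{E}}$ in (2) $\Rightarrow$ (1): upgrading the local/pointwise cogeneration by $\calP$ to a global derived equivalence requires a careful use of the strong cogeneration axiom to pass from the heart to the whole bounded category, in a setting where $M_\calP$ is only a priori known to be a K3 surface as a coarse moduli.
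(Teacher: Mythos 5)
Your implication (1)$\Rightarrow$(2) follows the paper: transport the skyscraper system of points through an enhancement-level equivalence and invoke the reconstruction of Section 2, which gives $M_\calP\cong S$. For (2)$\Rightarrow$(1), however, the paper does not construct and analyze a Fourier--Mukai functor by hand: it simply applies Theorem 7.2 of \cite{toenvaquiepoints} (quoted in the preliminaries), which already asserts that the natural dg functor $T_Y^{op}\to\Perf(M_\calP,[\scrM_\calP])$ is a quasi-equivalence once the t-structure is perfect and open, the system is bounded and strongly co-generating, and $M_\calP$ is proper (automatic here since $M_\calP$ is a projective K3), and then uses Lemma \ref{lemdual} to dualize and obtain $T_Y\simeq\Perf(M_\calP,[\scrM_\calP]^{-1})$ (cf.\ Remark \ref{-1} for why the inverse twist appears). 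Your sketch instead proposes to re-prove that theorem: full faithfulness ``reduces to the standard $\Ext$ computation'' and essential surjectivity follows from ``Beilinson-type arguments'' upgraded by strong cogeneration. That passage from the heart to the whole derived category is precisely the hard content of To\"en--Vaqui\'e's result, and you do not supply it (you flag it yourself as the main obstacle); as written this direction is a gap, though it closes immediately if you cite the theorem and verify its hypotheses instead.

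The two refinements are where the genuine gaps lie. First, ``untwisted iff trivial gerbe'' is not a tautology about identifying Brauer classes with $\G_m$-gerbes: the class $\alpha$ lives on the existentially quantified $(S,\alpha)$ of (1), while $[\scrM_\calP]$ lives on $M_\calP$, and the nontrivial direction is to rule out that $\A_Y$ is simultaneously equivalent to an untwisted $\rmD^b(S)$ and to $\rmD^b(M_\calP,\beta)$ with $\beta\neq 1$. The paper does this with the transcendental-lattice argument: the discriminants of $T(S)$ and $T(S,\beta)$ differ by $n^2\neq 1$, so no such double equivalence exists; your proposal omits this entirely. Second, for the open immersion $S\hookrightarrow M_T^{pt,0}$, the claim ``skyscrapers are simple and unobstructed of dimension $2$, hence the map is \'etale'' is not a proof: on a K3 one has $\Ext^2(\O_x,\O_x)\cong\C\neq 0$, so unobstructedness is not automatic, and more importantly $M_T^{pt,0}$ is cut out only by the numerical conditions $[\spadesuit]$ and $[0]$, so its infinitesimal structure at a skyscraper is exactly what must be controlled (the paper explicitly notes that $M_\calP$ and $M_T^{pt,0}$ have the same closed points but possibly different infinitesimal information). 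The paper's Proposition \ref{mtpt0} proves that $\eta$ is a monomorphism and then formally smooth, by showing that any flat family over a square-zero extension whose restriction is the structure sheaf of a graph is again the structure sheaf of a graph (adapting Proposition 5.6 of \cite{tvalgebrisation}); an argument of this kind is needed and is missing from your sketch.
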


In the statement above the space $M_T^{pt,0}$ is the coarse moduli space of the $\G_m$-gerbe $\scrM^{pt,0}_T$ (see subsection \ref{pt0} for the definition) and is an attempt to capture the information of point-like objects by considering the open substack of $\scrM_T$ made of objects in $T_Y$ which have the correct ext-groups and Chern character, but we were unable to prove a statement as strong as the one obtained for $\scrM_\calP$.
\\

This paper is structured as follows: after a section of preliminaries, in Section 2 we establish the reconstruction result for general smooth projective variety. In Section 3 we apply the result to the case of cubic fourfolds and K3 surfaces.

\section{Preliminaries}
We are going to use some technical tools from higher/dg category theory and derived geometry, as well as geometric facts about varieties, especially (smooth projective) K3 surfaces. Since it is more common to feel comfortable on either side but rarely on both, we do our best to summarize the necessary facts in this section.

\subsection{Cubic fourfolds, decompositions and associated K3's}\label{cfdaK3}

It has been remarked that cubic fourfolds, that is smooth hypersurfaces in $\P^5_\C$, have striking similarities with K3 surfaces, for example in the shape of their Hodge diamond. Actually this story goes beyond the similarity: Hassett \cite{hassett} has introduced \emph{special} cubic fourfolds as points of a codimension $1$ locus in the moduli space of cubics, each irreducible component of which consists of cubics classified by an integer $d=\mathrm{disc}\langle h^2,T\rangle$ where $h$ is the hyperplane class and $T$ is the class of a surface not homologous to $h^2$, proving that for certain values of $d$ one can associate to the cubic a K3 surface via Torelli theorem.  

On the other hand, this relation among fourfolds and surfaces has been found in the realm of derived categories, but let us review some glossary before talking about these results.

A full triangulated subcategory $\A$ of a triangulated category $\mathcal{D}$ is \emph{admissible} if the inclusion functor admits both left and right adjoints.

Two admissible triangulated subcategories $\mathcal{A}$, $\mathcal{B}$ of $\mathcal{D}$ form a \emph{semi-orthogonal decomposition} of $\mathcal{D}$, written $\mathcal{D}=\langle \mathcal{A}, \mathcal{B}\rangle$ if the following holds:
\begin{enumerate}[leftmargin=*, align=left]
\item[$\bullet$] $\Hom_\mathcal{D}(B,A)=0 \qquad \forall A\in\mathcal{A}, B\in\mathcal{B}$
\item[$\bullet$] the smallest triangulated subcategory of $\mathcal{D}$ containing both $\mathcal{A}$ and $\mathcal{B}$ coincides with $\mathcal{D}$.
\end{enumerate}
Obviously the definition can be generalized to a finite collection of triangulated subcategories $\mathcal{D}=\langle \A_1,\ldots ,\A_n\rangle$. Let us define the full triangulated subcategories left and right orthogonal respectively to a triangulated subcategory $\A$ as follows:
\begin{align*}
^\bot\A=\{E\in \mathcal{D}\,|\,\Hom_\mathcal{D}(E,A[n])=0\mbox{ for } A\in \A, n\in\Z\}\\
\A^\bot=\{E\in \mathcal{D}\,|\,\Hom_\mathcal{D}(A[n], E)=0\mbox{ for } A\in \A, n\in\Z\}.
\end{align*}
If $\A$ is admissible we then get semi-orthogonal decompositions
\[
\mathcal{D}=\langle \A, ^{\perp}\!\A\rangle \qquad\quad \mathcal{D}=\langle \A^\bot,\A\rangle.
\]

Kuznetsov \cite{kuznetsov} exhibited a semi-orthogonal decomposition of the derived category of the cubic fourfold $Y\subset \P^5_\C$
\[
\rmD^b(Y)=\langle \A_Y,\O_Y,\O_Y(H),\O_Y(2H)\rangle \qquad\A_Y:=\langle \O_Y,\O_Y(H),\O_Y(2H)\rangle^\bot
\]
where $H$ is the pullback of the hyperplane class in $\P^5_\C$, and found that in some cases one can produce a K3 surface $S$ such that $\A_Y\simeq \rmD^b(S)$, also proving that for cubic fourfolds containing a plane there is an equivalence of $\A_Y$ with the derived category of a K3 twisted by a Brauer class. Recently Addington and Thomas \cite{addingtonthomas} proved that generically the special cubic fourfolds found by Hassett and Kuznetsov coincide (and it is conjectured that they coincide globally).

It is an interesting fact that no cubic fourfold has yet been shown to be irrational, and for all the known rational examples this connection with K3 surfaces holds. It is nonetheless conjectured that the generic cubic fourfold will only be unirational (and cannot have an associated K3).

\subsection{Gerbes and twisted sheaves}

The most concise way to introduce gerbes over a scheme $X$ is saying that they are classified up to equivalence by the second cohomology group $\H^2(X,\O_X^\ast)$. This definition gives the hint that they are some kind of higher order equivalent of line bundles but it is after some work that one can characterize them as a particular kind of stacks. Stacks here will be viewed as (pseudo)functors from some algebro-geometric site $\mathcal{C}_{X}$ of $X$ to the category of groupoids, and a \emph{gerbe} will then be a stack $\mathcal{X}$ which is locally non-empty, that is for every $Y\in\mathcal{C}_{X}$ there is an open cover $U\to Y$ such that $\mathcal{X}(U)\neq \emptyset$, and such that for any two sections $\alpha$, $\beta$ in $\mathcal{X}(Y)$ there exists a cover $\{U_i\to Y\}$ such that there is at least one arrow (which is an isomorphism) $\alpha_{\vert U_i} \to \beta_{\vert U_i}$ for every $i$.

A lot of useful information about the gerbe is contained in the automorphism sheaves of $\alpha\in \mathcal{X}(Y)$ 
\[
\underline{\mathrm{Aut}}_\alpha\colon (V\to Y)\mapsto \underline{\mathrm{Aut}}_{\mathcal{X}(V)}(\alpha_{\vert V})
\]
and that is the reason to introduce $\mu$-gerbes (or gerbes banded by $\mu$) for some sheaf of groups $\mu$ as those gerbes $\mathcal{X}$ such that for every $Y$ and every $\alpha\in \mathcal{X}(Y)$ there is an isomorphism $i_\alpha\colon \mu\to \underline{\mathrm{Aut}}_\alpha$ such that for another object $\beta \in \mathcal{X}(Y)$ the diagram
\[
\begin{tikzcd}[row sep=tiny]
 & \underline{\mathrm{Aut}}_\alpha \ar[dd, "\wr"]\\
\mu \ar[ur, "i_\alpha"] \ar[dr, "i_\beta"']  & \\
 & \underline{\mathrm{Aut}}_\beta
\end{tikzcd}
\]
commutes. Morphisms of $\mu$-gerbes are morphisms of stacks $\phi\colon \mathcal{X}\to \mathcal{X'}$ such that the following diagram commutes:
\[
\begin{tikzcd}[row sep=tiny]
 & \underline{\mathrm{Aut}}_\alpha \ar[dd, "\wr"]\\
\mu \ar[ur, "i_\alpha"] \ar[dr, "i_{\phi(\alpha)}"']  & \\
 & \underline{\mathrm{Aut}}_{\phi(\alpha)}
\end{tikzcd}
\]

Given a general stack $\mathcal{X}$ over $X$ we can associate to it its coarse space $\pi_0 \mathcal{X}$, given by the sheafification of the presheaf which associates to an element $U\in \mathcal{C}_{X}$ the set of isomorphism classes (i.e. the connected components) of the groupoid $\mathcal{X}(U)$. In this way $\mathcal{X}$ is always a gerbe over $\pi_0 \mathcal{X}$.

The gerbes encountered in this work will mostly be banded by the multiplicative group sheaf $\G_m$ and therefore classified by $\H^2(X,\G_m)$.

Let $\alpha\in \H^2(X,\O_X^\ast)$ be the equivalence class of a gerbe, we can represent it as a \v{C}ech $2$-cocycle $\{\alpha_{ijk}\in \Gamma(U_{ijk},\O_X^\ast)\}_{i,j,k\in I}$ over some open cover $\{U_i\to X\}_{i\in I}$ of $X$ (where we wrote $U_{ijk}$ for $U_i \times_X U_j \times_X U_k$) and use these functions to twist the glueing of sheaves over the open subspaces:

\begin{dfn}
An \emph{$\alpha$-twisted sheaf} on a scheme $X$ is the datum of a collection of $\O_X$-modules $\{F_i\}_{i\in I}$ over the open subschemes of some cover $\{U_i\}_{i\in I}$, and of glueing functions $\{\phi_{ij}\}_{i,j\in I}$, that is $\phi_{ij}\colon {F_j}_{\vert U_i \times_X U_j}\stackrel{\sim}{\longrightarrow} {F_i}_{\vert U_i \times_X U_j}$, such that for every $i,j\in I$ it holds $\phi_{ii}=\id$, $\phi_{ij}={\phi_{ji}}^{-1}$ and
\[
\phi_{ij}\phi_{jk}\phi_{ki}=\alpha_{ijk}\id_{{F_i}_{\vert U_{ijk}}}\qquad \forall i,j,k\in I.
\]
\end{dfn}
Using a different cocycle $\{\alpha'_{ijk}\}_{ijk}$ representing $\alpha$ over the same open cover gives a bijection between twisted sheaves built using the representative $\alpha_{ijk}$ and those defined using $\alpha'_{ijk}$ and this bijection depends on the choice of a $1$-cocycle $\{\lambda_{ij}\}_{ij}\in \check{C}^1((U_i),\O_X^\ast)$ whose boundary is the difference between the two $2$-cocycles. Passing to a refinement of the cover results in the same way in a bijection of twisted sheaves. This means that we can define up to non-canonical equivalence the category of $\alpha$-twisted twisted of $\O_X$-modules and (always up to non-canonical equivalence) its notable subcategories: the abelian category of quasi-coherent $\alpha$-twisted sheaves and of coherent $\alpha$-twisted sheaves. Familiar homological constructions can then be carried over in the twisted version with substantially no harm, therefore we can for instance talk about derived categories of twisted coherent sheaves on a variety. Inside this twisted derived categories we can define $\RHom$ and $\Lotimes$, precisely like in the non-twisted case with just the caveat that, for instance, the derived tensor product of complexes of $\alpha$-twisted  and $\alpha'$-twisted sheaves will be a complex of $\alpha\alpha'$-twisted sheaves, therefore these derived functors are not endofunctors anymore. Nonetheless it still holds the adjunction relation between them. All this is carried out in detail in Chapter I.2 of \cite{caldararu}.

\begin{rmk} A more abstract but equivalent approach is: given a $\G_m$-gerbe $\mathcal{X}$ on $X$, one can define quasi-coherent sheaves on $\mathcal{X}$ and every $F\in\QCoh(\mathcal{X})$ has a natural $\G_m$-action on it, so that it can be decomposed into its $i^{th}$-eigensheaves: 
\[
F=\bigoplus_i F_i
\]
we then say that $F$ is a twisted sheaf on $X$ if $F=F_1$. 
\end{rmk}

We will need the following

\begin{lemma}\label{lemdual}
$\RHom(-,\O_X)$ is an involution on the derived category of twisted perfect complexes over a smooth projective scheme $X$.
\end{lemma}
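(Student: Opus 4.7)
The plan is to reduce the statement to the classical biduality theorem for ordinary perfect complexes, exploiting that twisted perfect complexes are locally untwisted. Fix a class $\alpha\in \H^2(X,\O_X^\ast)$ and a \v Cech representative $\{\alpha_{ijk}\}$ on an open cover $\{U_i\}$ trivializing the gerbe. The first observation is that, since $\O_X$ is untwisted, the functor $\RHom(-,\O_X)$ sends $\alpha$-twisted perfect complexes to $\alpha^{-1}$-twisted ones: if $E=\{E_i,\phi_{ij}\}$ satisfies the cocycle relation $\phi_{ij}\phi_{jk}\phi_{ki}=\alpha_{ijk}\id$, then $E^\vee=\{\RHom(E_i,\O_{U_i}),(\phi_{ij}^\vee)^{-1}\}$ satisfies the analogous relation with $\alpha_{ijk}^{-1}$, because dualization reverses the order of composition and fixes scalar multiplication by $\alpha_{ijk}$ on the nose. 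Iterating, the bidual $E^{\vee\vee}$ is again $\alpha$-twisted, and the desired involutivity amounts to producing a natural quasi-isomorphism $\eta_E\colon E\to E^{\vee\vee}$.

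For the construction of $\eta_E$, on each $U_i$ I would take the untwisted biduality morphism $\eta_{E_i}\colon E_i\to E_i^{\vee\vee}$, which is classically an equivalence for perfect complexes on a scheme. The compatibility to verify is that, on $U_{ij}$, the glueing datum of $E^{\vee\vee}$ is transported to the glueing datum of $E$ under $\eta$: the former is $(\phi_{ij}^\vee)^\vee$, which the canonical untwisted biduality identifies with $\phi_{ij}$, so the required square commutes by the naturality of $\eta$.

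Finally, $\eta_E$ being a quasi-isomorphism is a local condition, so it reduces to the classical biduality for ordinary perfect complexes on each $U_i$. The main obstacle is not conceptual but notational: one must carefully track how the $\G_m$-valued cocycle data behaves under dualization, so that the $\alpha_{ijk}$ on the bidual side match those on the original side. A tidier alternative, which I would probably prefer for exposition, is to invoke the description of twisted sheaves as the $1$-eigensheaves of $\G_m$ acting on $\QCoh$ of the gerbe recalled in the remark preceding the lemma: under this dictionary, the statement reduces to the biduality theorem for perfect complexes on the gerbe, which is itself local on $X$ and hence reduces to the untwisted case.
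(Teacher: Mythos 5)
Your strategy --- reduce biduality for twisted perfect complexes to the classical untwisted statement by working over a cover trivializing the cocycle --- is the same reduction the paper makes, and your bookkeeping of the twists (the dual of an $\alpha$-twisted complex is $\alpha^{-1}$-twisted, the bidual again $\alpha$-twisted) is correct and is exactly what is used later when the paper writes $T_Y\simeq\Perf(M_\calP,[\scrM_\calP]^{-1})$. The one place where you genuinely diverge, and where your write-up has a soft spot, is the construction of the map $\eta_E\colon E\to\RHom(\RHom(E,\O_X),\O_X)$. The paper obtains $\eta_E$ globally, as the unit coming from the twisted tensor--Hom adjunction, and only localizes to check that it is an isomorphism (dévissage to $E=\O_X$, where SGA6 applies); checking that a given morphism is an isomorphism locally is harmless. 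You instead propose to build $\eta_E$ itself by gluing the local untwisted biduality maps $\eta_{E_i}$ after verifying compatibility on overlaps via naturality. If the $\eta_{E_i}$ are taken merely as morphisms in the derived categories of the $U_i$, agreement on double overlaps does not by itself produce a morphism in $\rmD^b(X,\alpha)$: Hom's in derived categories do not satisfy descent, so gluing morphisms from local data requires an extra argument or an enhancement. To make your construction rigorous you should either work at the chain level, replacing $E$ by a global bounded complex of locally free $\alpha$-twisted sheaves (available here, since $\mathrm{Br}=\mathrm{Br}'$ for such $X$, as recalled in the paper), in which case the evaluation map is already a globally defined chain map and your local computation becomes a verification of compatibility with the gluing data, or simply take the globally defined adjunction-unit/evaluation map, as the paper does, and reserve the local argument for the isomorphism check. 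Your alternative via quasi-coherent sheaves on the $\G_m$-gerbe is fine and matches the remark preceding the lemma, but the same caveat applies: produce the biduality morphism globally on the gerbe first, then check it is an equivalence locally.
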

\begin{proof}
Let us have $E$ in some twisted derived category $\rmD^b(X,\alpha)$. The smoothness of $X$ implies that $E$ is perfect. We get the natural map by applying twice the adjunction:
\begin{align*}
\RHom(E,\O_X) &\stackrel{id}{\longrightarrow} \RHom(E,\O_X)\qquad\\
E\Lotimes \RHom(E,\O_X) &\longrightarrow \O_X\qquad\\
E &\longrightarrow \RHom(\RHom(E,\O_X),\O_X)\qquad
\end{align*}
and in order to prove that the latter is an isomorphism in $D^b(X,\alpha)$ it is enough to apply a standard dévissage argument by working locally, thus reducing oneself to the case of $E=\O_X$ where the duality holds (since it is non-twisted - see for instance \cite{sga6} I Proposition 7.2). The glueing data of $E$ and of the double dual correspond to each other because they do locally and in a natural way, therefore we have isomorphism of the twisted complexes as well.
\end{proof}

\begin{rmk}
The twisting cocycle $\alpha$ can be realized in several ways other than the class of a gerbe, but some are not feasible in general. Azumaya algebras are sheaves of algebras which locally are the endomorphism sheaf of a locally free sheaf and up to a certain equivalence relation constitute $\mathrm{Br}(X)$, the Brauer group of $X$, which in general injects strictly into the cohomological Brauer group $\mathrm{Br'}(X):=\H^2_{\text{ét}}(X,\G_m)_{tors}$.
In our cases however (e.g. affine or quasi-projective schemes) we rely on big results stating that we have in fact equality of these two groups (see \cite{deJ}).
\end{rmk}

\subsection{Derived algebraic geometry}
\subsubsection{Derived stacks}

It would be impossible to give here a complete a summary of what a derived stack is, because of the big amount of technical tools involved in its mere definition: we will therefore point the reader towards the two principal comprehensive references \cite{luriePhD} and \cite{HAG2}, as well as a more readable one in \cite{toendag}.

A derived stack is some kind of generalized space, where it is now fundamental to think of a space as its functor of points, that is the representable functor from some site to the category of sets given by taking Hom sets to the space. 
 In our setting we pass from "classical" algebraic geometry, whose domain is the site of commutative algebras, to derived algebraic geometry, where we consider, over some fixed base ring $k$, simplicial commutative $k$-algebras $\sCAlg_k$ and commutative differential graded algebras in non-positive degrees $\cdgAlg_k^{\leq 0}$ as derived analogues of commutative algebras (and where we can make sense of higher topologies as well). As we will be almost always working over $\C$ the homotopy theories of these two categories are equivalent. We will also have to allow spaces of morphisms between objects and not just sets (this is the higher stacks part of the picture). 
To sum up (and introduce some notation), a derived stack can be thought of as a functor (in the higher categorical sense) $F\in \dSt_k$
\[
F\colon \sCAlg_k\to \sSet
\]
which satisfies some higher analogue of the sheaf axioms in the given (higher) topology. Therefore the simplest examples of objects we are talking about are the derived affine schemes, that is the representable functors $F=\Map_{\sCAlg_k}(A,-)$ for some simplicial commutative algebra $A$.

Derived stacks one wants to do geometry with are those which have some good representability property with respect to an atlas (exactly like schemes/algebraic spaces or Deligne-Mumford/Artin stacks in the non-derived setting are the sheaves or stacks which can be reduced to representable objects when looking them on a cover) and this idea gives rise to the notion of \emph{geometric} derived stack.

In this paper we will mostly follow the approach and formalism contained in \cite{HAG2} Chapter 2.2.

\subsubsection{Moduli of objects in a dg category}

All the results and definitions in this section can be found in the paper \cite{toenvaquie}, from where some notation is borrowed as well. For a readable introduction to dg categories and their homotopy theory the reader can consult \cite{toendg}.

All along $k$ will be a commutative unitary ring of characteristic $0$ and from a certain point on we will assume $k=\mathbb{C}$. 

\begin{dfn}
A \emph{dg category} is a $k$-linear category $T$ enriched over the category $\Ch{k}$ of (co)chain complexes, that is for every pair of objects $x,y\in T$ the space $\Hom_T(x,y)$ is a cochain complex and the composition and associativity maps are maps of chain complexes. Given every dg category $T$ there is an underlying category, denoted $[T]$ or $\H^0(T)$, obtained taking the same objects of $T$ and defining $\Hom_{[T]}(x,y)$ to be the zeroth cohomology group of $\Hom_T(x,y)$.
\end{dfn}
We can define the category $\dgCat_k$ whose objects are (small) dg categories and whose morphisms are functors between them which induce maps of cochain complexes between the Hom spaces (these are called dg functors). 

The primitive examples of $k$-linear dg categories are dg algebras over $k$, seen as a dg category with one object and the algebra as endomorphism chain complex, and the category $\dgCh{k}$ whose objects are chain complexes over $k$ and where we define $\Hom_{\dgCh{k}}(X,Y)$ as $\bigoplus_{p\in\Z}\Hom_{\catname{grMod}_k}(X,Y[p])$, with the obvious differential making this set into a chain complex.

Dg functors $T^{op}\to \dgCh{k}$ are called $T^{op}$-dg-modules. These form a dg category $T^{op}\Mod$ which will be endowed with the model structure whose fibrations and weak equivalences are those morphisms of dg-modules which are objectwise respectively fibrations or weak equivalences in the projective model structure of $\Ch{k}$. We will denote $\wh{T}$ the full dg subcategory of $T^{op}$-dg-modules formed by cofibrant $T^{op}$ dg-modules. 

\begin{dfn}
An object $x$ in a model category $\bfcalC$ is \emph{homotopically finitely presented} if $\Map_{\bfcalC}(x,-)$ commutes with filtered homotopy colimits, that is: for every filtered system $\{{y_i}\}\in \bfcalC$ the natural map
\[
\colim_i \Map_{\bfcalC}(x,y_i)\longrightarrow \Map_{\bfcalC}(x,\hocolim_i y_i)
\]
is an isomorphism. In case $\bfcalC=T^{op}\Mod$ for a dg category $T$ we will speak of \emph{perfect} or \emph{compact} modules as a shorthand for homotopically finitely presented modules.
\end{dfn}

\begin{dfn}
Let $T$ be a $k$-linear dg category.
\begin{enumerate}[leftmargin=*, align=left]
\item[$\bullet$] $T$ is \emph{locally perfect} if $\Hom_T(x,y)$ is a perfect complex of $k$-modules for every objects $x,y$ in $T$.
\item[$\bullet$] $T$ has a \emph{compact generator} if $[\wh{T}]$ has a compact generator as a triangulated category (i.e. some $g\in T$ such that $\Hom_{[T]}(g[m],x)=0$ for every $m\in\Z$ implies $x=0$ which is moreover compact in the sense that for every family ${\{x_i\}}_{i\in I}$ of objects in $T$ the natural map
\[
\coprod_{i\in I} \Hom_{[T]}(g,x_i)\longrightarrow \Hom_{[T]}(g,\coprod_{i\in I}x_i)
\]
is an isomorphism).
\item[$\bullet$] $T$ is \emph{proper} if it is locally perfect and has a compact generator.
\item[$\bullet$] $T$ is \emph{smooth} if it is a perfect object in $\wh{T\Lotimes T^{op}}$ when considered as the $T^{op}\Lotimes T$-dg-module
\begin{align*}
T\colon T^{op}\Lotimes T &\longrightarrow \dgCh{k}\\
(x,y) &\longmapsto \Hom_T(x,y)
\end{align*}
\item[$\bullet$] $T$ is \emph{triangulated}\footnote{This property is sometimes called being \emph{pretriangulated} in literature, here we mostly follow the notational conventions of To\"{e}n and Vaquié} if the Yoneda embedding $h\colon T\to \whperf{T}$ is a quasi-equivalence, which by definition means that the dg functor is a quasi-isomorphism on the morphism chain complexes and induces an equivalence on the homotopy categories.
\item[$\bullet$] $T$ is \emph{saturated} if it is proper, smooth and triangulated.
\item[$\bullet$] $T$ is \emph{of finite type} if there exists a dg algebra $B$, homotopically finitely presented in the model category \catname{dgAlg} such that $\wh{T}$ is quasi-equivalent to $\wh{B^{op}}$
\end{enumerate}
\end{dfn}

Saturated dg categories are equipped with an intrinsic Serre functor: it is the dg endofunctor $S_T\colon T\to T$ associated to the cofibrant perfect $T\otimes T^{op}$-dg-module sending the element $(x,y)\in T\otimes T^{op}$ to $\Hom_T(x,y)^\vee$, the dual chain complex over $k$ (see \cite{toenvaquiepoints}).

\begin{dfn}
Given a dg category $T$ define the functor
\begin{align*}
\scrM_T\colon \catname{sCAlg}_k &\longrightarrow \sSet\\
A &\longmapsto \Map_{\dgCat_k}(T^{op},\whperf{A})
\end{align*}
where the mapping space in $\dgCat_k$ is computed using the model structure due to Tabuada \cite{tabuada}.
\end{dfn}

\begin{thm}[\cite{toenvaquie}]
The functor $\scrM_T$ is a derived stack.

If $T$ is a smooth and proper dg category then $\scrM_T$ is a locally geometric derived stack locally of finite presentation, in the sense of the following definition.
\end{thm}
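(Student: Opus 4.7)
The plan is to split the statement into its two parts and attack them with different tools. For the first --- that $\scrM_T$ is a derived stack for arbitrary $T$ --- the natural route is to verify hyperdescent directly. The assignment $A \mapsto \whperf{A}$ is itself a hypersheaf of dg categories on the étale (equivalently fppf) site of $\sCAlg_k$; this is a now-classical descent statement for perfect complexes (Hirschowitz--Simpson, Toën). Since $\Map_{\dgCat_k}(T^{op},-)$ sends homotopy limits of the target to homotopy limits of spaces, $\scrM_T$ inherits hyperdescent with no further work.

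For the geometricity claim, I would invoke Artin--Lurie representability in derived algebraic geometry, which reduces the problem to four conditions: (a) cohesiveness and nilcompleteness; (b) existence of a perfect cotangent complex; (c) the classical truncation $\t0 \scrM_T$ is an Artin stack locally of finite type; (d) local finite presentation. Items (a) and (d) are formal from the mapping-space definition. For (b), at a point $E\colon T^{op} \to \whperf{A}$ the shifted tangent complex should be $\RHom_{T\otimes A}(E,E)[1]$, and local perfectness of $T$ combined with the fact that $E$ lands in $\whperf{A}$ forces this to be a perfect $A$-module; functoriality in $A$ then yields the relative cotangent complex on all of $\scrM_T$.

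The hard step is (c), and I would treat it by exploiting the smooth-and-proper hypothesis to reduce to the finite-type case: a smooth and proper dg category admits a compact generator whose endomorphism dg algebra $B$ is homotopically finitely presented, the essential input being the availability of an intrinsic Serre functor to dualize the generator. One then identifies $\scrM_T$ with the moduli of perfect $B^{op}$-dg-modules. Presenting $B$ as a retract of a free dg algebra on finitely many generators in bounded degrees produces an explicit smooth atlas: fix a finite-dimensional graded module, parametrize the affine scheme of Maurer--Cartan differentials on it, and quotient by the gauge group $\mathrm{GL}_n$. Stratifying by amplitude and Betti numbers of a chosen resolution, these quotient stacks glue to cover all of $\scrM_T$, and this exhaustion is precisely where properness enters (it guarantees that every compact object admits such a finite presentation by shifts of the generator). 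The verification of (c), in particular the geometric control of the atlas across strata and the checking that the transition morphisms are open immersions, is the main obstacle and constitutes the technical heart of the argument.
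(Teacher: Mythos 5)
This theorem is not proved in the paper at all: it is imported verbatim from To\"en--Vaqui\'e \cite{toenvaquie}, so the only fair comparison is with their original argument. Your first part matches it: $A\mapsto \whperf{A}$ satisfies (hyper)descent as a prestack of dg categories, and $\Map_{\dgCat_k}(T^{op},-)$ turns homotopy limits into homotopy limits, so $\scrM_T$ is a derived stack; this is exactly how they argue. For geometricity, however, your route (Artin--Lurie representability: cohesiveness, nilcompleteness, cotangent complex, Artin truncation) is genuinely different from theirs: To\"en--Vaqui\'e do not invoke a representability criterion. They first prove that a saturated dg category is of finite type, replace $T$ by a homotopically finitely presented dg algebra $B$, and then prove geometricity of $\scrM_B$ directly, by exhausting it by the open substacks $\scrM_B^{[a,b]}$ of modules of Tor-amplitude in $[a,b]$ and building smooth atlases by induction on the amplitude, the base case being (shifted) vector bundles, i.e. a disjoint union of quotient stacks by $\mathrm{GL}_n$. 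The representability-criterion strategy is viable in principle (and closer to later treatments), and buys a cleaner separation between deformation theory and the classical moduli problem, at the price of needing the classical truncation as an input.

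Two points in your sketch are genuinely off. First, the reduction to finite type does not hinge on the Serre functor: what makes $B=\mathrm{End}(g)$ homotopically finitely presented is smoothness, i.e. perfection of the diagonal bimodule, while properness (which in \cite{toenvaquie} already includes the existence of a compact generator) gives perfection of the Hom complexes over $k$; likewise, every compact object is a retract of a finite cell object built from shifts of the generator by compact generation alone, not because of properness, so your stated role for properness in the exhaustion is misplaced. Second, the criterion cannot be applied to $\scrM_T$ as a whole: at a point $E$ the homotopy sheaves are $\Ext^{1-i}(E,E)$, which are nonzero in unboundedly negative degrees as $E$ varies, so $\scrM_T$ is not $n$-geometric for any fixed $n$ and is only \emph{locally} geometric; the decomposition into the Tor-amplitude opens $\scrM_T^{[a,b]}$ must come first, with representability (or an atlas construction) applied to each piece. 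Finally, your step (c) is where essentially the entire content of the theorem lives --- for a general finite-type $B$ the Artin property of the truncation is not available off the shelf --- and the Maurer--Cartan-plus-$\mathrm{GL}_n$ atlas as you describe it assumes perfect modules are strict bounded complexes of finite free modules over the base, which they are not over a general simplicial ring (they are only retracts of finite cell modules, and one must work with vector bundles and with openness of the amplitude strata, which is precisely the inductive mechanism of the original proof). As written, the proposal is a reasonable plan whose hard core is deferred rather than carried out.
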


\begin{dfn}
$F\in \dSt_k$ is \emph{locally geometric} if it can be written as a filtered colimit $F\simeq \hocolim_i F_i$ such that every $F_i \in \dSt_k$ is $n_i$-geometric for some $n_i$ and each morphism $F_i \to F$ is a monomorphism\footnote{By definition this means that $F_i \to F_i \times^{h}_{F} F_i$ is a natural isomorphism in $\dSt_k$.}. If every $F_i$ can be chosen to be locally of finite presentation we say the same for $F$.
\end{dfn}

We pass freely from $\catname{sCAlg}_k$ to $\cdgAlg^{\leq 0}_k$ by means of the normalization (Dold-Kan) functor $N$ since we are in characteristic $0$, and we will usually prefer the dg formalism.

For any $F\in \dSt_k$ we can choose a presentation $F=\hocolim_i h_{A_i}$ where $A_i\in \cdgAlg^{\leq 0}_k$ and $h_{A_i}\colon A\mapsto \Hom(A_i,A)$, which by virtue of standard resolutions and straightenings can be chosen to be functorial. We then define
\[
\rmL_{perf}(F)= {\left(\holim \wh{(A_i)}_{perf}\right)}^{op}
\]
\begin{prop}[\cite{toenvaquie}]\label{adj}
There is an adjunction 
\[
\rmL_{perf}\colon \dSt_k \rightleftarrows (\dgCat_k)^{op}\colon\scrM_{-}
\]
\end{prop}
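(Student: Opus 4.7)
The plan is to verify the adjunction first on representable (affine derived) stacks and then bootstrap to all of $\dSt_k$ using the very formula that defines $\rmL_{perf}$. Concretely, the goal is to produce a weak equivalence
\[
\Map_{\dSt_k}(F,\scrM_T) \simeq \Map_{(\dgCat_k)^{op}}(\rmL_{perf}(F),T)=\Map_{\dgCat_k}(T,\rmL_{perf}(F))
\]
natural in $F\in\dSt_k$ and $T\in\dgCat_k$.

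For $F=h_A$ representable the prescription of $\rmL_{perf}$ collapses to $\rmL_{perf}(h_A)=\whperf{A}^{op}$, and both sides tautologically reduce to $\Map_{\dgCat_k}(T^{op},\whperf{A})$. On the left this is Yoneda combined with the definition of $\scrM_T$; on the right it is the canonical identification $\Map_{\dgCat_k}(T,C^{op})\simeq\Map_{\dgCat_k}(T^{op},C)$ produced by the self-equivalence $(-)^{op}$ of $\dgCat_k$ (which sends a dg functor $F$ to $F^{op}$).

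For a general $F\simeq\hocolim_i h_{A_i}$ I would argue that both sides convert the $\hocolim$ in $F$ into a $\holim$ of the same diagram of mapping spaces. On one side, mapping out of a $\hocolim$ into the derived stack $\scrM_T$ gives
\[
\Map_{\dSt_k}(F,\scrM_T)\simeq \holim_i \scrM_T(A_i)\simeq \holim_i \Map_{\dgCat_k}(T^{op},\whperf{A_i}).
\]
On the other, $(-)^{op}$ is an involutive equivalence of $\dgCat_k$ and so commutes with $\holim$, while the derived mapping space $\Map_{\dgCat_k}(T,-)$ preserves $\holim$ in the target, producing
\[
\Map_{\dgCat_k}(T,\rmL_{perf}(F))=\Map_{\dgCat_k}\!\bigl(T,(\holim_i\whperf{A_i})^{op}\bigr)\simeq \holim_i \Map_{\dgCat_k}(T^{op},\whperf{A_i}).
\]
The representable case makes these pointwise equivalences compatible with the transition maps, so they assemble into the required bijection, and naturality in $T$ and $F$ is inherited from Yoneda and from the $\hocolim$/$\holim$ presentations. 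Independence from the choice of presentation, and hence functoriality of $\rmL_{perf}$, is then forced by the universal property established on representables, once one fixes a functorial cofibrant replacement of stacks by representables as in \cite{HAG2}.

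The main obstacle is purely technical: one has to verify that Tabuada's model structure on $\dgCat_k$ is well-behaved enough for $\Map_{\dgCat_k}(T,-)$ to convert $\holim$ of dg categories into $\holim$ of simplicial sets, and that the $\whperf{A_i}$ genuinely assemble into a diagram in $\dgCat_k$ whose $\holim$ is correctly computed by the appropriate fibrant replacement. These are general properties of combinatorial model categories, but pinning down the correct (co)fibrant replacements on both sides, and arranging the presentation $F\simeq\hocolim_i h_{A_i}$ to be honestly functorial, is where the actual labour lies.
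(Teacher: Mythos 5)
Your proposal is correct and follows essentially the same route the paper indicates: Yoneda plus the definition of $\scrM_T$ on representables, then the presentation $F\simeq\hocolim_i h_{A_i}$ and commutation of mapping spaces with homotopy (co)limits, with the op-involution handling the contravariance just as the paper's remark on how morphisms are mapped does. The technical caveats you flag (functorial presentations, behaviour of Tabuada's model structure) are exactly the points the paper defers to To\"en--Vaqui\'e.
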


This adjunction follows formally from the definition of $\scrM_T$, Yoneda Lemma and commutation of colimits with mapping spaces, once we have given the correct definition of $\rmL_{perf}$. For caution about contravariancy, let us see how morphisms are mapped via these functors.
Given a diagram of simplicial commutative algebras $I\to \catname{sCAlg}_k$, $i\mapsto A_i$, we get a diagram of dg algebras $i \mapsto NA_i$. For a morphism $i\stackrel{\phi}{\to}j$ in $I$ we have the extension of scalars functor $NA_i\to NA_j$ which gives a dg functor
\[
NA_j\otimes_{NA_i}-\colon NA^{op}_i\Mod \longrightarrow NA^{op}_j\Mod
\]
restricting to a dg functor $\wh{A_i}\to\wh{A_j}$.

We recall the following definition:
\begin{dfn}
Let $A\in \cdgAlg^{\leq 0}_k$. An $A$-dg-module $P$ is of \emph{Tor amplitude contained in $[a,b]$} if for every $\H^0(A)$-module $M$ it holds
\[
\H^i(P\Lotimes_{A}M)=0\qquad\mbox{ if }i\notin [a,b].
\]
\end{dfn}

See section $2.4$ in \cite{toenvaquie} for more properties of this notion.

\subsubsection{Systems of points and moduli of point-like objects}\label{secpts}

This subsection is a quick recap of what we are going to use from the article \cite{toenvaquiepoints}, where they introduce t-structures on dg categories, discuss the notion of system of points and prove a result of reconstruction of a dg category with these structures as the dg category of perfect (twisted) complexes over the moduli space of point-like objects in the category.

From this section on, we will work over $\C$ and we will follow the notational convention of \cite{toenvaquiepoints} where they denote by $\scrM$ the truncated (underived) stack and write $\R\scrM$ for its derived version.

\begin{dfn} Given a saturated dg category $T$, a \emph{t-structure} on $T$ is the datum of a t-structure in the standard sense on the triangulated category $[\wh{T}]$, with the additional hypothesis that the subcategory $[\wh{T}]^{\leq 0}$ is generated under arbitrary sums, cones and direct factors by a small set of objects.
\end{dfn}

Whenever we will talk about t-structures on dg categories, these categories will always be supposed to be saturated, and in particular triangulated.

\begin{rmk} If $[T]$ already has a t-structure given by a full triangulated subcategory $[T]^{\leq 0}$, this corresponds to a full dg subcategory $T^{\leq 0}$. The homotopy colimit completion $\wh{T}$ of $T$ can then be endowed with an induced t-structure defined by letting $\wh{T}^{\leq 0}$ be the smallest full dg subcategory of $\wh{T}$ closed by colimits and containing the image of $T^{\leq 0}$ under the Yoneda embedding. With such a construction and defining the corresponding subcategory $\wh{T}^{\geq 1}$ by orthogonality, the Yoneda functor becomes a left and right t-exact embedding and $\wh{T^{\leq 0}}=\wh{T}^{\leq 0}$. The heart of the induced t-structure on $[\wh{T}]$ is ${\wh{T}}^\heartsuit=\wh{T^{\heartsuit}}$. This is now written in Lemma A.1 of \cite{benjaminmaurogabriele}.
\end{rmk}

\begin{dfn}\label{tstruct}
A t-structure on a triangulated dg category $T$ is 
\begin{enumerate}[leftmargin=*, align=left]
\item[$\bullet$] \emph{bounded} if every compact object of $[\wh{T}]$ is bounded (i.e. every compact -in the triangulated sense- $E\in [\wh{T}]$ is contained in $\wh{T}^{[a,b]}$ for some $a\leq b$);
\item[$\bullet$] \emph{precompact} if for every $E\in [T]$ the objects $\tau_{\leq 0} (E)\in [\wh{T}]^{\leq 0}$ and $\tau_{\geq 0}(E)\in [\wh{T}]^{\geq 0}$ are compact in $[\wh{T}]$;
\item[$\bullet$] \emph{compact} if it is bounded, precompact and if the full sub dg category $\wh{T}^{\geq 0}$ is stable under filtrant colimits.
\end{enumerate}
\end{dfn}

A compact t-structure over a dg category $T$ (therefore really over the colimit completion $\wh{T}$ of $T$) restricts to a t-structure in the standard sense over $[T]$.

Given a commutative $\C$-algebra we can get a saturated base-changed dg category $T_A:=(\wh{T\Lotimes_\C A})_{perf} $ and define $\wh{T_A}^{\geq 0}$ by base-change

\[
\begin{tikzcd}
\wh{T_A}^{\geq 0} \ar[d] \ar[r] & \wh{T_A} \ar[d] \\
\wh{T}^{\geq 0} \ar[r] & \wh{T}
\end{tikzcd}
\]
where the right vertical arrow is given by forgetting the $A$-module structure via $\wh{T_A}\simeq\RHom(A,\wh{T})\to \RHom(\C,\wh{T})\simeq \wh{T}$. Let also $\wh{T_A}^{\leq0}$ be the smallest sub-dg category of $\wh{T_A}$ stable under sums, cones and retracts and containing the image of $\wh{T}^{\leq 0}$ under the scalar extension functor $-\otimes_\C A\colon \wh{T}\to \wh{T_A}$. This provides the induced t-structure on $T_A$. 

We also have, for every pair of integers $a\leq b$, the substack $\scrM_T^{[a,b]}\subset \scrM_T$ defined as the one taking a commutative $\C$-algebra $A$ to the full subsimplicial set of $\scrM_T(A)$ formed by $T^{op}\otimes A$-dg-modules $E$ such that, for every $A\to A'$, 
\[
E\otimes_A A'\in [\wh{T}_{A'}]^{[a,b]}
\]

\begin{dfn}A t-structure on a dg category $T$ is
\begin{enumerate}[leftmargin=*, align=left]
\item[$\bullet$] \emph{open} if for every pair of integers $a\leq b$ the substack $\scrM_T^{[a,b]}\subset \scrM_T$ is representable by an open immersion;

\item[$\bullet$] \emph{perfect} if for every regular algebra $A$ the induced t-structure on $T_A$ is compact and moreover the heart ${\wh{T_A}}^\heartsuit$ is a noetherian abelian category.\footnote{this by definition means that every subobject of a $\omega$-small object is $\omega$-small, that is if an object $X\in T$ is such that $\Hom(X,-)$ commutes with colimits indexed by $\omega$ then for every subobject $Y\hookrightarrow X$ the functor $\Hom(Y,-)$ has the same property.}
\end{enumerate}
\end{dfn}

\begin{dfn}
An object $x\in T$ is a \emph{point object of dimension $d$} if
\begin{enumerate}[leftmargin=*, align=left]
\item[$\bullet$] $S_T(x)\simeq x[d]$
\item[$\bullet$] $\H^1\Hom_T(x,x)$ is a $\C$-vector space of dimension $d$ 
\item[$\bullet$] $\Hom_T(x,x)$ is quasi-isomorphic to $\mathrm{Sym}_k(\H^1(\Hom_T(x,x))[-1])$ as a dg-algebra.
\end{enumerate}

A \emph{system of points of dimension $d$} in $T$ is the data of a set $\calP$ of isomorphism classes of point objects of $T$ such that $\forall x\neq y\in \calP$ $\Hom_T(x,y)=0$ and 
\[
E\simeq 0\in T \Leftrightarrow \Hom_T(E,x)\simeq 0\quad \mbox{for every }x\in \calP
\]
\end{dfn}

\begin{dfn}
Let $T$ be a saturated dg category endowed with a perfect t-structure and a system of points $\calP$ of dimension $d$. The system of points $\calP$ \emph{cogenerates the t-structure} if
\[
E\in T^{\leq 0} \Leftrightarrow \H^i(\Hom_T(E,x))\simeq 0\quad \mbox{for all }x\in\calP \mbox{ and }i<0
\]

The system of points $\calP$ \emph{strongly cogenerates the t-structure} if it cogenerates the t-structure and moreover for every object $E\in T^\heartsuit$
\[
E\simeq 0 \Leftrightarrow [E,x]\simeq 0\quad \forall x\in\calP
\]
\end{dfn}

\begin{dfn}
A system of points $\calP$ is \emph{bounded} if there exists an open substack of finite type $U\subset \scrM_T$ such that
\[
\calP\subset \pi_0(U(\C))\subset \pi_0(\scrM_T(\C))
\]
\end{dfn}

When a saturated dg category is provided with a t-structure and a system of points which satisfy all the properties above, it is possible to consider the moduli stack $\scrM_\calP$ classifying objects of $\calP$, defined as the one whose space of $A$-points for $A \in \CAlg_k$ of finite type is 
\[
\scrM_\calP(A)=\{E \in T^\heartsuit\,|\,\forall a\colon A\to \C,\,E\otimes_A \C\in \calP\}.
\]
For a general algebra $A$ we define $\scrM_\calP(A)=\colim_{A_\alpha}\scrM_\calP(A_\alpha)$ where the colimit ranges over subalgebras $A_\alpha\subset A$ of finite type. We will also consider its coarse moduli space $M_\calP$. The main result in the paper \cite{toenvaquiepoints} is the following:
\begin{thm}[\cite{toenvaquiepoints} 7.2] Let $T$ be a saturated dg category equipped with a perfect and open t-structure and a bounded system of points $\calP$ such that $\calP$ strongly co-generates the t-structure. If $M_\calP$ is proper over $k$ then there exists a natural dg functor to the dg category of perfect complexes twisted by the class of the gerbe $\scrM_\calP$
\[
T^{op}\longrightarrow \Perf(M_\calP,[\scrM_\calP])
\] 
which is a quasi-equivalence.
\end{thm}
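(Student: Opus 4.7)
The plan is to construct $\Phi$ from the universal inclusion $\scrM_\calP\hookrightarrow\scrM_T$ via the adjunction of Proposition \ref{adj}, then split the verification of the quasi-equivalence into fully-faithfulness and essential surjectivity, with the technical heart concentrated in the former.

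To build the functor, I would first apply $\rmL_{perf}\dashv\scrM_-$ to the canonical inclusion $\scrM_\calP\hookrightarrow\scrM_T$. This yields a dg functor $T^{op}\to\rmL_{perf}(\scrM_\calP)$, equivalent to the datum of a universal family $\mathcal{U}$, namely a perfect $T^{op}$-dg-module on $\scrM_\calP$. The third axiom for a point object $p$ forces $\H^0\Hom_T(p,p)\simeq\C$, so $\Aut(p)\simeq\G_m$ acts as scalar automorphisms on every $p\in\calP$ and $\scrM_\calP\to M_\calP$ is naturally a $\G_m$-gerbe of class $[\scrM_\calP]\in\H^2(M_\calP,\G_m)$. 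Using the $\G_m$-weight decomposition of quasi-coherent sheaves on a gerbe recalled in the Preliminaries, $\mathcal{U}$ is of weight $1$ (the $\G_m$-action on the fibre of the universal family at any closed point is by its tautological character), and since the weight-$1$ summand of $\Perf(\scrM_\calP)$ is by definition $\Perf(M_\calP,[\scrM_\calP])$, the functor factors as the desired $\Phi\colon T^{op}\to\Perf(M_\calP,[\scrM_\calP])$.

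For fully-faithfulness, I would start with pairs of point objects. When $p\neq q\in\calP$, both $\Hom_T(p,q)$ and $\RHom_{M_\calP}(\Phi(p),\Phi(q))$ vanish (the first by definition of a system of points, the second because the twisted skyscrapers at distinct closed points of the smooth variety $M_\calP$ have disjoint support). When $p=q$, the third axiom identifies $\Hom_T(p,p)\simeq\mathrm{Sym}_\C(\H^1(\Hom_T(p,p))[-1])$, which matches the standard Koszul computation of the self-Ext-algebra of a twisted skyscraper at a smooth $d$-dimensional point. To extend the comparison to the heart $T^\heartsuit$, I would use strong cogeneration: it asserts that the collection $\{\Hom_T(E,p)\}_{p\in\calP}$ determines $E\in T^\heartsuit$, and combined with openness and perfectness of the t-structure these fibres should assemble into an $\alpha$-twisted coherent sheaf on $M_\calP$ whose $\RHom$ computes the right-hand side. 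Boundedness and precompactness then allow one to realise arbitrary compact objects of $T$ as finite iterated extensions of shifts of heart objects, and exactness of both Hom-functors propagates fully-faithfulness by a standard dévissage on both variables.

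For essential surjectivity, the image of $\Phi$ is closed under shifts, cones and retracts and contains the $[\scrM_\calP]$-twisted skyscraper sheaves at every closed point of $M_\calP$ (the $\Phi$-images of the point objects). As $M_\calP$ is proper, locally trivialising the gerbe and invoking the classical generation of $\Perf$ of a proper smooth variety by skyscrapers shows that these twisted skyscrapers generate $\Perf(M_\calP,[\scrM_\calP])$ as a thick triangulated subcategory, so $\Phi$ is essentially surjective and together with fully-faithfulness a quasi-equivalence. The principal obstacle is the heart-level step in the fully-faithfulness argument: upgrading the pointwise data $\{\Hom_T(E,p)\}_{p\in\calP}$ to a chain-level isomorphism $\Hom_T(E,F)\simeq\RHom_{M_\calP}(\Phi(E),\Phi(F))$ for $E,F\in T^\heartsuit$ requires converting strong cogeneration into a statement that varies well in families, which is precisely where the openness of the t-structure and the noetherianity of the heart enter decisively.
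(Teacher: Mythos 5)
A preliminary but important remark: the paper itself does not prove this statement. It is quoted verbatim as Theorem 7.2 of To\"en--Vaqui\'e \cite{toenvaquiepoints} and used as a black box in the proof of Theorem \ref{thm}, so there is no internal proof to compare your attempt against; I can only judge the sketch on its own terms.

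Your construction of the functor (apply the adjunction of Proposition \ref{adj} to $\scrM_\calP\hookrightarrow\scrM_T$, then use the $\G_m$-weight decomposition on the gerbe) is a reasonable starting point, but note Remark \ref{-1} of the paper: the natural identification of a $\G_m$-gerbe with $\scrM_\calP$ does \emph{not} respect the band, which is exactly why the paper has to dualize via Lemma \ref{lemdual} in Theorem \ref{thm}; so your parenthetical claim that the universal family has weight exactly $1$ (rather than $-1$) needs an actual argument. Beyond that there are two genuine gaps. First, the heart-level fully-faithfulness step --- passing from the pointwise data $\{\Hom_T(E,p)\}_{p\in\calP}$ to a chain-level identification $\Hom_T(E,F)\simeq\RHom(\Phi(E),\Phi(F))$ --- is the real content of the theorem, and your sketch only says the fibres ``should assemble'' into a twisted coherent sheaf; strong cogeneration is a conservativity/detection statement, not a construction of such a sheaf nor a comparison of mapping complexes, so the central step is acknowledged but not supplied. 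Second, the essential surjectivity argument rests on a false claim: skyscraper sheaves do \emph{not} generate $\Perf$ of a smooth proper variety of positive dimension as a thick triangulated subcategory, since every object of the thick subcategory they generate has zero-dimensional support (supports are preserved under cones, shifts and direct summands), so for instance $\O_{M_\calP}$ is never reached. What is true is that the (twisted) skyscrapers form a spanning class; the standard repair is to use that $T$ is saturated, so once $\Phi$ is fully faithful its image is an admissible subcategory, and one then kills the orthogonal complement by testing against the skyscrapers in the image. As written, the proposal does not establish the theorem.
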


\section{Moduli of point-like objects in the dg category of a variety}

In this section we show that if $T$ is the dg category of twisted perfect complexes on a smooth projective variety $X$ over $\C$, the skyscraper sheaves form a system of points in $\Perf(X,\alpha)$. Moreover the variety can be reconstructed from such a system of points. This statement is contained in \cite{toenvaquiepoints}.

We can rephrase the definition of $\R\scrM_T$ as in section 3.5 of \cite{toenvaquie} to see that for any $A\in \cdgAlg^{\leq 0}_\C$
\begin{align*}
\R\scrM_T(A)&\simeq \bfMap_{\dSt}(X,\RPerf)(A)\simeq\\
&\simeq \left\{\begin{aligned}
&\mbox{nerve of the non-full subcategory of $\Perf(X\times^h \RSpec A)$} \\
& \,\, \mbox{ whose morphisms are weak equivalences}
 \end{aligned}\right\}.
\end{align*}

\begin{rmk}[Skyscraper sheaves are perfect of Tor amplitude $0$]\label{skyscr} 
$X$ is smooth projective, therefore $\rmL_{qcoh}(X)$ is generated by $E=\bigoplus_{0\leq i\leq d} \O(-i)$ and we can consider the map of dg categories $\C\to \Perf(X)$ sending the only object in the dg category associated to $\C$ to $E$. Like in Section 3.5 of \cite{toenvaquie}, this induces by pullback the map
\begin{align*}
\scrM_{\Perf(X)}\simeq \bfMap(X,\RPerf)&\to \scrM_{\C}=\RPerf\\
E &\mapsto \bigoplus \R\Gamma(X,E(-i))
\end{align*}
and as $X$ is smooth $\rmD^b(\Coh(X))=\rmD_{perf}(X)$ therefore we can take a perfect complex $F$ quasi-isomorphic to a skyscraper sheaf $A$.
$\O(-i)$ is locally free hence $F(-i)$ is calculated degreewise and is quasi-isomorphic to $A(-i)$. It follows $\R\Gamma(X,F(-i))=\R\Gamma(X,A(-i))$. The map above sends in this way $A$ to $\bigoplus \R\Gamma(X,A(-i))$. If $A=x_\ast(\C)$ is the skyscraper supported on the point $x$ of $X$, the projection formula gives
\[
x_\ast(\C\otimes_\C x^\ast\O(-i))\simeq x_\ast(\C)\otimes_{\O_X}\O(-i)
\]
and $x^\ast\O(-i)=x^{-1}(\O(-i))\otimes_{\O_{X,x}}\C$. $\O(-i)$ is locally free, therefore $x^{-1}(\O(-i))\simeq \O_{X,x}$ as $\O_{X,x}$-module and therefore $x^\ast(\O(-i))=\C$. It follows
\[
\R\Gamma(X,A(-i))=\R\Gamma(X,A)=\C[0].
\]
This means that $\bfMap(X,\RPerf)\to \RPerf$ sends skyscrapers of $X$ to perfect complexes in $\dgVect_\C$ concentrated in degree zero. As $\C$ is a field, those have Tor amplitude contained in $[0,0]$.

In the twisted case the result still holds due to a corollary to the main result in \cite{toenazumaya} which generalized to twisted derived categories the existence of a generator.\end{rmk}

If $X$ is a smooth projective variety and $T=\Perf(X,\alpha)$ is its dg category of perfect $\alpha$-twisted complexes, the standard t-structure on $[T]=\rmD^b(X,\alpha)$ specified by the full triangulated subcategory of $[T]$ of complexes quasi-isomorphic to ones concentrated in non-positive degrees gives a t-structure on $T$ in the sense of definition \ref{tstruct}.

\begin{lemma}
The standard t-structure on $\Perf(X,\alpha)$ is open and perfect.
Let $\calP$ be the set of skyscraper sheaves supported on points of $X$, $\mathrm{dim}(X)=d$, then $\calP$ forms a system of points in $T=\Perf(X,\alpha)$ which is bounded and strongly co-generates the standard t-structure on $\Perf(X,\alpha)$.
\end{lemma}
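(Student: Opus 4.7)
The plan is to split the statement into two halves --- openness and perfectness of the standard t-structure, and the system-of-points properties of skyscrapers --- and to reduce every verification, whenever possible, to the untwisted case by picking an étale cover of $X$ on which the gerbe class $\alpha$ trivialises. For openness I would check that for every $a\leq b$ the substack $\scrM_T^{[a,b]}$ is open in $\scrM_T$ by invoking upper semicontinuity of fibre cohomology for a relative perfect complex on $X_A:=X\times^h \RSpec A$: vanishing of cohomology sheaves outside $[a,b]$ on geometric fibres is an open condition. For perfectness, a regular $\C$-algebra $A$ makes $X_A$ smooth, so $T_A\simeq \Perf(X_A,\alpha)$ has heart equal to the abelian category $\Coh(X_A,\alpha)$ of $\alpha$-twisted coherent sheaves on a smooth locally noetherian scheme; this is noetherian by local reduction to the untwisted case, and compactness plus boundedness of the induced t-structure follow from smoothness of $X_A$ and the identification of twisted perfect complexes with bounded complexes of twisted coherent sheaves.

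For the point-object conditions at a closed point $x\in X$, let $k(x)$ denote the $\alpha$-twisted skyscraper at $x$, which is a well-defined perfect $\alpha$-twisted complex because $\alpha$ trivialises on any open containing $x$ (up to the choice of trivialisation encoding the $\G_m$-gerbe structure). Since the Serre functor on $\Perf(X,\alpha)$ is $-\otimes \omega_X[d]$ and tensoring a skyscraper by a line bundle leaves it unchanged, $S_T(k(x))\simeq k(x)[d]$. A classical computation gives $\H^1\Hom_T(k(x),k(x))\cong T_x X$ (dimension $d$) and $\Ext^*(k(x),k(x))\cong \Lambda^* T_x X$ as graded rings; to upgrade the latter to a dg quasi-isomorphism with $\mathrm{Sym}_\C(T_x X[-1])$ I would resolve $k(x)$ by the Koszul complex on a regular system of parameters at $x$, whose endomorphism dg algebra is visibly an exterior algebra on $T_x X$ in cohomological degree $1$ with trivial differential, and hence formal.

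Disjoint supports give $\Hom_T(k(x),k(y))=0$ for $x\neq y$, and the standard detection argument shows $E\simeq 0$ iff $\Hom_T(E,k(x))\simeq 0$ for every $x$: for $E\not\simeq 0$ pick the largest $j$ with $\H^j(E)\neq 0$ and some $x\in\supp \H^j(E)$; the hypercohomology spectral sequence $E_2^{p,q}=\Ext^p(\H^{-q}(E),k(x))\Rightarrow \Ext^{p+q}(E,k(x))$ then produces a surviving nonzero class in $\Ext^{-j}(E,k(x))$ coming from $\Hom(\H^j(E),k(x))$, which is the dual of the nonzero fibre of $\H^j(E)$ at $x$. All skyscrapers share a common class in $\scrM_T$ --- they have Tor amplitude $[0,0]$ and the same Chern character --- so they lie in one open substack of finite type whose coarse moduli maps to $X$ via support, yielding boundedness. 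The same spectral sequence gives co-generation of the t-structure: for $E\in T^{\leq 0}$ only terms with $p,q\geq 0$ appear, forcing $\H^i\Hom_T(E,k(x))=0$ for $i<0$; the reverse implication is the maximal-cohomology argument just given. Strong co-generation is essentially Nakayama's lemma, since for $E\in T^\heartsuit=\Coh(X,\alpha)$ the space $[E,k(x)]=\Hom(E,k(x))$ is the dual of the fibre of $E$ at $x$, and vanishing of every fibre forces every stalk and hence $E$ itself to vanish.

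The main obstacle I expect is the dg-level upgrade $\Hom_T(k(x),k(x))\simeq \mathrm{Sym}_\C(T_x X[-1])$: the graded identification with $\Lambda^* T_x X$ is classical, but one must rule out nontrivial higher $A_\infty$-operations, and the cleanest route is the Koszul-resolution argument sketched above. All other ingredients are classical sheaf theory carried over to the twisted setting through the étale-local triviality of the gerbe.
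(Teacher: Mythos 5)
Your proposal is correct and takes essentially the same route as the paper's own (very terse) proof: semicontinuity of coherent cohomology for openness, the noetherian heart of twisted coherent sheaves on $\Spec A\times X$ for perfectness, the standard Serre-functor and Koszul/Ext computations for the point-object axioms, support plus Nakayama arguments for (strong) co-generation, and openness of Tor-amplitude (the substack $\scrM_T^{[0,0]}$) for boundedness. You merely supply the details the paper leaves implicit --- the Koszul formality of $\RHom(\O_x,\O_x)$, the spectral-sequence detection argument, and the extra fixed-Chern-character condition that makes the bounding open substack genuinely of finite type.
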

\begin{proof}
Let $A$ be a regular $\C$-algebra, if $T=\Perf(S,\alpha)$ then $T_A$ is saturated and therefore by definition compact objects in $[\wh{T_A}]$ correspond to perfect complexes on $\Spec A\times S$ which are $1\boxtimes \alpha$-twisted. By definition of the t-structure these will be of bounded amplitude and this shows that the t-structure is bounded. Precompactness of the t-structure on $T_A$ follows from the properties of the truncation functor 
. Stability of $T^{\geq 0}$ under filtrant colimits is readily seen by calculating of the mapping spaces (taking colimits out), and the heart of $T_A$ consists of coherent sheaves on $\Spec A\times S$ where $A$ is noetherian (regular) and $S$ is smooth projective, therefore it is noetherian abelian. This shows that the t-structure on $T$ is perfect.

The t-structure considered on $T$ is also open since this is equivalent to the semi-continuity theorem for cohomology of coherent sheaves. 

It is clear that skyscraper sheaves belong to the heart of the t-structure on $\Perf(X,\alpha)$ (by virtue of Remark \ref{skyscr}) and satisfy the cohomological properties in order to be point objects of dimension $d$ and that $\calP=\{\O_x\,|\,x\in X(\C)\}$ defines a system of points in $\Perf(X,\alpha)$. Also taking Hom's to skyscraper has the properties required to show that $\calP$ strongly co-generates the standard t-structure on perfect complexes. This system of points is also bounded because having Tor dimension contained in a certain range is an open property and therefore we can choose $U=\scrM_T^{[0,0]}$ to satify the definition of bounded system of points.
\end{proof}

As recalled in section \ref{secpts}, the machinery of \cite{toenvaquiepoints} yields the moduli space $\scrM_\calP$ whose space of $A$-points for a regular algebra $A$ is formed by perfect complexes $E$ in $\Perf(\Spec A \times S,1\boxtimes\alpha)^\heartsuit$ such that for every $A\to \C$ it holds $E\otimes_A\C\in\calP$.

If we note $p,q$ the projections from $\Spec A\times X$ to, respectively, $\Spec A$ and $X$ and $j_a\colon \{a\}\times X\hookrightarrow \Spec A\times X$ we see that $E\otimes_A \C \simeq E\otimes_{p^\ast \O_{\Spec A}} p^\ast \O_a \simeq {j_a}^\ast E$ since pullback commutes with tensor product. 

The adjunction in Proposition \ref{adj} always provides the unit map $\eta\colon X\to\scrM_{\Perf(X,\alpha)}$. 

\begin{ex}
Let us compute the image of the points of $X$ under this map.\\
Affine case: let $X=\Spec B$ be an affine scheme, then $L_{perf}(X)=\whperf{B}=\Perf(B)$ is the dg category of perfect complexes of $B$-modules and let us denote it by $T$.
Then the adjunction
\[
\Map_{(\dgCat_k)^{op}}(T,T)=:\scrM_T(B)\stackrel{\sim}{\longrightarrow}\Map_{\dSt_k}(X,\scrM_T)
\]
maps $\id_T\colon T\to T$, interpreted as an element in $\scrM_T(B)$, to the map $\Phi$ defined by
\begin{align*}
X(A)=\Hom_{\CAlg_k}(B,A)&\stackrel{\Phi_A}{\longrightarrow} \scrM_T(A)\\
f &\longmapsto (\hat{f}\circ -\colon \whperf{B}\to \whperf{A})
\end{align*}
where we denoted by $\hat{f}$ the extensions of scalars functor from $B$-modules to $A$-modules via $f$.
A point $x\in X(k)$ corresponds to a morphism $\phi_x\colon B\to k$ and therefore to a choice of a splitting $B\simeq k\oplus m_x$ where $m_x=\ker \phi_x$. In this case the map $\phi_x$ quotients out $m_x$ and we have
\[
\Phi_k(\phi_x)=\nfrac{B}{m_x}\otimes -\colon \Perf(B)\to\Perf(k)=\wh{\C}_{perf}\subset \dgCh{\C}
\]
that is the functor corepresented by the skyscraper sheaf $\O_x$.
When considering a presentation of $X$ as a colimit of affine schemes this turns out to be the skyscraper sheaf of a global point of $X$. 
\end{ex}

In \cite{toenmorita} the map $\eta$ is specified to be 
\begin{align*}
X(A)=\Hom(\Spec A, X)&\to \lvert\Perf(X\times \Spec A,1\boxtimes \alpha)\rvert \simeq \scrM_T(A)\\
(f\colon \Spec A \to X)&\longmapsto E
\end{align*}
where $E$ is a kernel of the integral transform $f^\ast\colon \Perf(X,\alpha)\to \Perf(A)$. This implies that $\eta$ factors through the point-like objects substack $\scrM_\calP$.

\begin{lemma}\label{lemflat}
Let $E\in \rmD^b(\Spec A\times X,1\boxtimes\alpha)$ such that for every $a\in \Spec A(\C)$ the complex $\L j_a^*E$ is a twisted $j_a^\ast\alpha$-sheaf on $\{a\}\times X$ (it is concentrated in degree zero). Then $E$ is isomorphic to an $\alpha$-sheaf which is faithfully flat over $\Spec A$.
\end{lemma}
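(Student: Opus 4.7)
The plan is to combine a derived version of the local criterion for flatness with an étale-local reduction to the untwisted setting. Since $1\boxtimes\alpha$ is pulled back from $X$ and concentration in degree zero, $A$-flatness and non-vanishing of fibers are all local properties on $\Spec A\times X$, I first pass to an étale cover on which $\alpha$ trivializes and work with honest complexes of quasi-coherent sheaves. Restricting further to an affine chart $\Spec R\subset \Spec A\times X$ with $R = A\otimes_\C B$ (so in particular $R$ is $A$-flat), the lemma reduces to the module-theoretic claim that a bounded complex $M^\bullet$ of $R$-modules with finitely generated cohomology, such that $M^\bullet\Lotimes_A \kappa(a)$ is concentrated in degree zero for every closed point $a\in \Spec A$, is quasi-isomorphic to a single $A$-flat $R$-module. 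Since flatness and cohomology vanishing can be tested stalkwise at closed points of $\Spec A$, I may localize further and assume $A$ is noetherian local with residue field $k$.

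The entire argument is then driven by the hypercohomology spectral sequence
\[
E_2^{p,q} = \mathrm{Tor}^A_{-p}(\H^q(M^\bullet), k) \;\Longrightarrow\; \H^{p+q}(M^\bullet \Lotimes_A k)
\]
combined with Nakayama's lemma. The top half is immediate: if $d = \max\{i : \H^i(M^\bullet) \neq 0\}$, the only contribution to total degree $d$ is $(0,d)$, yielding $\H^d(M^\bullet \Lotimes_A k) = \H^d(M^\bullet)\otimes_A k$, which is non-zero by Nakayama and, together with the hypothesis, forces $d\leq 0$.

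Reading off the same spectral sequence at total degree $-1$, the only contributions are $(-1,0)$ and $(0,-1)$, giving $\mathrm{Tor}^A_1(\H^0(M^\bullet), k)$ and $\H^{-1}(M^\bullet)\otimes_A k$ respectively; vanishing of the abutment forces each graded piece of the filtration to vanish, in particular $\mathrm{Tor}^A_1(\H^0(M^\bullet), k)=0$, so $\H^0(M^\bullet)$ is $A$-flat by the classical local criterion. Once this is known, $\mathrm{Tor}^A_{\geq 1}(\H^0(M^\bullet), k)=0$ kills the entire column $q=0$ in negative total degree, so at each total degree $n<0$ the only surviving term is $(0,n)=\H^n(M^\bullet)\otimes_A k$. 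Vanishing of the abutment combined with Nakayama inductively yields $\H^n(M^\bullet)=0$ for every $n<0$.

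Finally, faithful flatness follows once one observes that in the intended application each fiber $\L j_a^*E$ is a non-zero twisted sheaf (the point-like object of $\calP$ attached to $a$), so $E\otimes_A \kappa(a)\neq 0$ at every closed point of $\Spec A$; $A$-flat together with nowhere-vanishing fibers is exactly faithful flatness. The main obstacle I foresee is the extraction of individual $\mathrm{Tor}$-vanishings from the vanishing of spectral-sequence abutments in the bottom-concentration step: it works cleanly here because the diagonals of interest contain very few non-zero entries, so there are no non-trivial extension problems to solve, but in higher-amplitude situations this derived-Nakayama bookkeeping would require more attention.
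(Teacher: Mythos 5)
Your overall strategy is the same as the paper's: both arguments adapt Huybrechts' Lemma 3.31, running the base-change spectral sequence $E_2^{p,q}=\mathrm{Tor}^A_{-p}(\calH^q(E),k(a))\Rightarrow \calH^{p+q}(\L j_a^\ast E)$ together with Nakayama, first to kill positive degrees, then to extract $\mathrm{Tor}_1$-vanishing (flatness of $\calH^0$), then to kill negative degrees; your preliminary étale untwisting is harmless (the paper works directly in the abelian category of twisted sheaves, where the spectral sequence exists anyway). The genuine gap is in the reduction ``localize at a closed point $a$ of $\Spec A$ and apply Nakayama over $A_{\mathfrak{m}_a}$''. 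The cohomology modules $\calH^q$ are finitely generated over $R=A\otimes_\C B$, not over $A$, and on an affine chart the fibres over $\Spec A$ are no longer proper, so a nonzero $\calH^q$ can have zero fibre over the closed point you fixed: with $A=\C[t]_{(t)}$ and $R=A[s]$, the module $M=R/(ts-1)\cong A[1/t]$ is nonzero, finitely generated over $R$, $A$-flat, and $M\otimes_A k=0$. Hence ``$\calH^d(M^\bullet)\otimes_A k\neq 0$ by Nakayama'' is unjustified, and in fact after your reduction the statement you set out to prove is false: $M^\bullet=(R/(ts-1))[1]$ has derived fibre zero at the unique closed point (trivially a sheaf in degree $0$), yet is not quasi-isomorphic to a module in degree $0$; the variant $R/(s)\oplus(R/(ts-1))[-1]$ even has nonzero fibre concentrated in degree $0$ at the closed point while carrying cohomology in degree $1$. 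The hypothesis at the other closed points of the original $\Spec A$, which is discarded once you fix $a$ in advance, is precisely what excludes such examples.

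The repair is the one the paper (following Huybrechts) makes: do not choose $a$ beforehand, but choose it according to the support of the offending cohomology sheaf. If $m$ is the largest (later, the largest negative) degree with $\calH^m(E)\neq 0$, pick a closed point of $\Spec A\times X$ in $\supp\calH^m(E)$; it lies over a closed point $a\in\Spec A$ (use that finite-type charts are Jacobson, or argue globally: $X$ is proper, so the image of the support is closed in $\Spec A$ and contains a closed point), and Nakayama in the local ring of that point of the product gives $\calH^m(E)\otimes_A k(a)\neq 0$, which is what feeds the spectral-sequence contradiction. With that correction your three-step bookkeeping (top degree, $\mathrm{Tor}_1$, descending induction) coincides with the paper's proof; note also, as you implicitly do, that faithfulness requires the extra input that the fibres are nonzero, which holds in the intended application but is not forced by the displayed hypotheses.
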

\begin{proof}
This is the twisted version of Lemma 3.31 in \cite{huybrechtsFM}, from where we adapt the proof. 

As we consider the derived category of the abelian category of twisted sheaves there is the spectral sequence $E_2^{p,q}=\calH^p(\L j_a^\ast \calH^q(E))$ converging to $\calH^{p+q}(\L j_a^\ast E)$, which vanishes for $p+q\neq 0$. 

Let $m\in \Z$ be maximal such that $\calH^m(E)\neq 0$. Then at least one stalk of that sheaf is non-zero on some closed point $a\in\Spec A$, that is $E_2^{0,m}=\calH^p(\L j_a^\ast \calH^m(E))\neq 0$, and this stays there in the limit which is non-trivial only for $(p,q)=(0,0)$, forcing $m=0$. Therefore $\calH^p(E)=0$ for all strictly positive $p$.

The same reasoning applies to $E_2^{0,-1}=\calH^{-1}(\L j_a^\ast \calH^0(E))$ which is therefore 0 and this ensures the flatness of $\calH^0(E)$.

To show that the sheaves $\calH^q(E)=0$ for $q<0$ suppose instead there are some non-trivial ones for some $q$ and let $m$ be the maximal value of $q$ where this happens, and let $a\in\Spec A$ be a closed point in the support of $\calH^m(E)$. All $E^{-p,q}_2=\calH^{-p}(\L j_a^\ast \calH^q(E))$ are trivial for $q>m$ and $p<0$ we would get in the limit $\calH^m(\L j_a^\ast E)=\calH^0(\L j_a^\ast \calH^m(E))\neq 0$, a contradiction. Examining the stalks one easily deduces the faithfulness of $-\otimes_A E$.
\end{proof}

\begin{lemma}\label{axioms}
If $E\in \scrM_\calP(A)$ then $E\otimes p^\ast-$ induces a bijection between closed subschemes of $\Spec A$ and quotients of $E$ in $\Coh(\Spec A\times X,1\boxtimes \alpha)$.
\end{lemma}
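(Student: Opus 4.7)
The plan is to use the faithful flatness provided by Lemma~\ref{lemflat} to handle well-definedness and injectivity, and then to identify $E$ with a rank-one (twisted) sheaf supported on a closed subscheme $Z\subset \Spec A\times X$ projecting isomorphically onto $\Spec A$, so that coherent quotients of $E$ correspond to ideals of $A$.

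First, by Lemma~\ref{lemflat} the sheaf $E$ sits in degree zero and is faithfully flat over $A$, so the functor $E\otimes p^*(-)$ is exact on $A$-modules. Applying it to $0\to I\to A\to A/I\to 0$ yields the short exact sequence $0\to E\otimes p^*I\to E\to E\otimes p^*(A/I)\to 0$, so $E\otimes p^*(A/I)$ is a coherent quotient of $E$ and the assignment is well-defined. If two ideals $I,J\subset A$ give the same quotient, then $E\otimes p^*I = E\otimes p^*J$ as subsheaves of $E$, so $E\otimes p^*((I+J)/I) = 0$; faithful flatness forces $(I+J)/I = 0$, and by symmetry $I = J$.

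For surjectivity I would analyze the scheme-theoretic support $Z\subset\Spec A\times X$ of $E$. Each geometric fiber of $p|_Z\colon Z\to \Spec A$ is a single reduced point of length one (the fiber of $E$ is an element of $\calP$), so combined with flatness of $E$ over $A$ a Nakayama-type argument shows that $p|_Z$ is finite flat of degree one, hence an isomorphism. Over an open cover of $X$ on which $\alpha$ trivializes, $Z$ is the graph of a morphism $\Spec A\to X$ and $E$ is isomorphic to $\O_Z$; globally $E$ is then a rank-one twisted line bundle on $Z\simeq\Spec A$. Given a coherent quotient $q\colon E\twoheadrightarrow Q$, its kernel is supported on $Z$ and is an $\O_Z$-submodule of $E$; locally $\ker(q)$ corresponds to an ideal $I\subset A$ via the isomorphism $p|_Z$, and these local ideals patch to a unique global ideal that can be recovered intrinsically from $q$ as the kernel of the pushforward $(p|_Z)_*E\to (p|_Z)_*Q$ on $\Spec A$. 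Thus $\ker(q) = E\otimes p^*I$ for a unique $I\subset A$, providing the inverse of the map.

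The main obstacle is verifying that $p|_Z$ is genuinely an isomorphism, and, in the twisted setting, that the local identifications $E\simeq \O_Z$ patch to produce a well-defined global ideal; both amount to routine but delicate local computations with coherent sheaves, the key point being to check that the ideal $I$ is independent of the trivialization of $\alpha$.
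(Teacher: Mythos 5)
Your first two steps are fine: well-definedness from right-exactness (indeed exactness, by flatness) of $E\otimes p^\ast-$, and your injectivity argument via $IE=JE\Rightarrow E\otimes_A (I+J)/I=0\Rightarrow (I+J)/I=0$ by faithful flatness is a clean variant of what the paper does (the paper instead recovers the subscheme from the quotient as $\supp p_\ast(E\otimes p^\ast\O_{\Spec A/I})$ and checks $\mathrm{Ann}_A(E/IE)=I$). The problem is your surjectivity step. You reduce it to the assertion that $p|_Z\colon Z=\supp E\to \Spec A$ is an isomorphism and that $E$ is a twisted line bundle on $Z$, justified by ``each geometric fiber of $p|_Z$ is a single reduced point of length one, so with flatness a Nakayama-type argument shows $p|_Z$ is finite flat of degree one.'' Two things break here. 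First, the scheme-theoretic support does not commute with base change: from $E_a\in\calP$ you only get that the fiber $Z_a$ is set-theoretically one point; the annihilator of $E$ restricted to the fiber can be strictly smaller than the annihilator of $E_a$, so you cannot assert the fibers of $Z$ are reduced of length one. Second, flatness of $Z$ over $\Spec A$ is not inherited from flatness of $E$, and Nakayama alone does not give it. The honest route to $Z\simeq\Spec A$ is: $p|_Z$ is quasi-finite and proper, hence finite; cohomology and base change makes $p_\ast E$ a line bundle; and then one runs the chain $\O_{\Spec A}\to \rho_\ast\O_Z\hookrightarrow \underline{\mathrm{End}}(p_\ast E)\simeq\O_{\Spec A}$ with composite the identity. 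But this is exactly the content of the \emph{next} proposition in the paper, which is deduced there from the present lemma via the Calabrese--Groechenig axioms (their Lemmas 2.7 and 2.8). So as written you are proving the lemma by appealing to (an unproved sketch of) the stronger downstream structure result, and the sketch has a genuine hole at its crux.

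Note also that the strong statement is not needed: the paper's surjectivity argument is elementary and stays at the level of the lemma. Given a coherent quotient $E\twoheadrightarrow F$ with kernel $K$, one sets $Z=\supp p_\ast F\subset \Spec A$ (support in the base, not of $E$), tensors the sequences $0\to K\to E\to F\to 0$ and $0\to I_Z\to\O_{\Spec A}\to\O_Z\to 0$ with $p^\ast\O_Z$ and $F\otimes p^\ast-$ respectively, and checks by a stalk/support computation that $F\otimes p^\ast I_Z=0$ and $K\otimes p^\ast\O_Z=0$, whence $E\otimes p^\ast\O_Z\simeq F$. If you want to keep your approach, you must first prove $p_\ast E$ is an invertible $A$-module and run the endomorphism argument above (independently of this lemma), at which point your local identification $E\simeq\O_Z$ and the patching of the ideals do go through; but that is substantially more work than the statement requires.
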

\begin{proof}
Given a quotient $\O_{\Spec A}\twoheadrightarrow \O_{\Spec A/I}$ we get the quotient $E\simeq E\otimes p^\ast \O_{\Spec A} \twoheadrightarrow E\otimes p^\ast \O_{\Spec A/I}$ by right-exactness of the tensor product. 

Viceversa given a quotient $E\twoheadrightarrow F$ define $Z=\supp{p_\ast F}$, the support of $p_\ast F$. It is a closed subscheme of $\Spec A$ and therefore of the form $\Spec A/I $ for some ideal $I$ in $A$.

Write $\mathcal{G}$ for $p_\ast(E\otimes p^\ast \O_{\Spec A/I})$ over $\Spec A$, the map $E\otimes p^\ast-$ is injective if $\O_{\Spec A/I}=\O_{\supp\mathcal{G}}$. Comparing the two exact sequences
\[
0\to \mathcal{I}\to \O_{\Spec A}\to \O_{\Spec A/I}\to 0\qquad 0\to \mathrm{Ann}\mathcal{G}\to \O_{\Spec A}\to \O_{\supp\mathcal{G}}\to 0
\]
we see that $\mathcal{G}$ is the sheaf $E/IE$ over $\Spec A$ and we are then reduced to prove that $\supp_A E/IE$ equals $\Spec A/I$, that is $\mathrm{Ann}_A E/IE=I$. 

The ideal $I$ is obviously contained in $\mathrm{Ann}_A E/IE$ and for the other inclusion let $a\in \mathrm{Ann}_A E/IE$, which is equivalent to having $(aE+IE)/IE=0$ (note that $\mathrm{Ann}_A E=\emptyset$). By flatness of $E$ over $A$, we have $(aE+IE)/IE=((aA+I)/I)\otimes_A E$. So the faithful flatness of $E$ implies $(aA+I)/I=0$ or equivalently $a\in I$.

Let us now pass to surjectivity and consider 
\begin{equation}\label{shortes}
0\to K\to E \to F\to 0
\end{equation}
a short exact sequence of coherent sheaves. If we call $Z=\supp p_\ast F$ and we tensor the sequence with $p^\ast \O_Z$ we get the long exact sequence associated to the derived functor $-\Lotimes p^\ast \O_Z$
\[
\cdots \to \mathrm{Tor}^1(F,p^\ast \O_Z) \to K\otimes p^\ast O_Z \to E\otimes p^\ast \O_Z \to F\otimes p^\ast \O_Z\to 0.
\]
The last nonzero term is isomorphic to $F$ because we have the exact sequence $0\to I_Z\to \O_{\Spec A}\to \O_Z\to 0$ to which we can apply the right-exact $F\otimes p^\ast -$, getting the exact sequence (after identifying $F\otimes \O_{\Spec A}$ with $F$)
\[
\cdots \to \mathrm{Tor}^1(F,p^\ast \O_Z) \to F\otimes p^\ast I_Z\to F\to F\otimes p^\ast \O_Z\to 0
\]
and the term $F\otimes p^\ast I_Z$ is zero (this can be checked locally and every stalk is zero either because it is out of the support of $F$, out of the support of $p^\ast I_Z$ or has the right hand part of the tensor product annihilating the left part). 
For similar reasons (comparing the supports of $K$ and $p^\ast \O_Z$ which are related by the exact sequence \ref{shortes}) it also holds $K\otimes p^\ast O_Z=0$, giving the identification between $E\otimes p^\ast \O_Z$ and $F$, thus proving surjectivity as well.
\end{proof}
Gathering all together we have the following

\begin{prop}
Let $\mathcal{X}$ be a $\G_m$-gerbe on $X$ with class $\alpha$. Then considering $T=\Perf(X,\alpha)$ and $\calP$ the system of points of skyscraper sheaves on $X$ yields an equivalence of stacks
\[
\eta\colon \mathcal{X}\to \scrM_\calP
\]
\end{prop}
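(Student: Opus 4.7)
The plan is to construct $\eta$ on $A$-points and verify that it is an equivalence of stacks by separately establishing essential surjectivity and full faithfulness, for $A$ a commutative $\C$-algebra. I identify the groupoid $\mathcal{X}(\Spec A)$ with the groupoid of pairs $(f,L)$ where $f\colon \Spec A\to X$ is a morphism and $L$ is an $f^\ast\alpha$-twisted line bundle on $\Spec A$, and define $\eta(f,L):=(\id,f)_\ast L$, the pushforward of $L$ along the graph embedding $(\id,f)\colon\Spec A\hookrightarrow \Spec A\times X$. By construction this is a $(1\boxtimes\alpha)$-twisted coherent sheaf on $\Spec A\times X$ supported on the graph $\Gamma_f$, flat of rank one over $\Spec A$ with skyscraper geometric fibers, hence an object of $\scrM_\calP(A)$, and the construction is manifestly functorial in $A$.

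For essential surjectivity, let $E\in \scrM_\calP(A)$. Lemma \ref{lemflat} implies that $E$ is a genuine $\alpha$-twisted coherent sheaf on $\Spec A\times X$, faithfully flat over $\Spec A$ with skyscraper geometric fibers. Lemma \ref{axioms} provides the key rigidity statement: the bijection between closed subschemes of $\Spec A$ and quotients of $E$ forces the scheme-theoretic support $Z$ of $E$ to be finite flat of degree one over $\Spec A$, and therefore to become an isomorphism onto $\Spec A$ under $p_1$; hence $Z=\Gamma_f$ for a uniquely determined morphism $f\colon \Spec A\to X$. Pulling back along the graph embedding yields an $f^\ast\alpha$-twisted line bundle $L:=(\id,f)^\ast E$, and $\eta(f,L)\simeq E$ by construction. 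As a byproduct this identifies the coarse moduli space $M_\calP$ with $X$ via the support map.

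Full faithfulness reduces to comparing automorphism sheaves. On $\mathcal{X}$ these are $\G_m$ by the banding hypothesis. On $\scrM_\calP$, flatness and fiberwise simplicity of $E$ force $\mathrm{End}(E)\simeq A$, so automorphisms again form $\G_m$. Naturality of $\eta$ ensures it induces the identity on these bands, and any morphism of $\G_m$-gerbes inducing the identity on bands is an equivalence. I expect the main obstacle to be the essential surjectivity step: extracting the scheme morphism $f$ and the twisted line bundle $L$ from $E$ in a canonical way relies crucially on Lemma \ref{axioms} to upgrade the set-theoretic support of $E$ to a scheme-theoretic graph, and on Lemma \ref{lemflat} to reduce from the derived category to an honest flat coherent sheaf.
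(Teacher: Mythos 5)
Your overall plan coincides with the paper's strategy (construct the map via graphs, use Lemmas \ref{lemflat} and \ref{axioms} for essential surjectivity, then compare automorphism sheaves), but the decisive step is asserted rather than proved. You claim that the bijection of Lemma \ref{axioms} ``forces the scheme-theoretic support $Z$ of $E$ to be finite flat of degree one over $\Spec A$'', hence a graph. That deduction is precisely the nontrivial content of the proposition and it is not a formal consequence of the bijection. In the paper, Lemma \ref{axioms} only serves to verify the axioms of a point object in the sense of Calabrese--Groechenig, whose Lemmas 2.7 and 2.8 in \cite{john} are then invoked to conclude that $\rho\colon Z=\supp E\to \Spec A$ is an affine \emph{universal homeomorphism} --- still weaker than an isomorphism: a priori $Z$ could carry a nilpotent thickening relative to $\Spec A$, and flatness of $E$ over $A$ does not pass to $\O_Z=\O/\mathrm{Ann}(E)$, so ``finite flat of degree one'' is not available at this stage. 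The paper closes this gap with a separate argument: $p_\ast E$ is a line bundle, and in the chain $\O_{\Spec A}\to\rho_\ast\O_Z\to\underline{\mathrm{End}}_{\rho_\ast\O_Z}(\rho_\ast\iota^\ast E)\to\underline{\mathrm{End}}_{\O_{\Spec A}}(\rho_\ast\iota^\ast E)\simeq\O_{\Spec A}$ the composite is the identity and the last two maps are injective, forcing $\rho_\ast\O_Z\simeq\O_{\Spec A}$; only then is $\rho$ an isomorphism, $Z=\Gamma_f$, and $E\otimes p^\ast(p_\ast E)^\vee\simeq\O_{\Gamma_f}$. Without an argument of this type (or a citation doing the same work), your essential surjectivity step does not go through.

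Two further points. In the twisted case you set $L:=(\id,f)^\ast E$ directly, but one must first know the twist is trivial along the support: the paper argues that, $\rho$ being a universal homeomorphism, trivializations of the gerbe on $\supp E$ are pulled back from $\Spec A$, reduces to the trivial case on a cover trivializing $f^\ast\alpha$, and concludes by descent; your write-up skips this. Finally, your assertion that $\eta$ induces the identity on the bands is contradicted by Remark \ref{-1}: the equivalence intertwines the $\G_m$-actions through inversion, not the identity. This does not endanger the statement (a morphism of gerbes compatible with an automorphism of the band is still an equivalence of stacks), but the band comparison in your full-faithfulness argument should be stated accordingly.
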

\begin{proof}The map is obtained by factoring the unit map of the adjunction $\mathrm{L}_{perf} \dashv \R\scrM$ via the inclusion of $\scrM_\calP\hookrightarrow \scrM_T$ and is a monomorphism because only equal morphisms can have isomorphic graphs.
Let us consider first the case where $\mathcal{X}=\mathrm{B}\G_m\times X$ is the trivial $\G_m$-gerbe on $X$. What we have proved in Lemma \ref{axioms} means that our hypotheses imply the axioms for point objects in \cite{john} hold and in particular their Lemmas 2.7 and 2.8 yield that, writing
\[
\begin{tikzcd}\label{supp}
Z=\supp E \ar[r, hook, "\iota"] \ar[dr, "\rho"'] & \Spec A\times X \ar[d, "p"]\\
& \Spec A
\end{tikzcd}
\]
$\rho$ is an affine (universal) homeomorphism. Here $\R p_\ast E= p_\ast E$ is a perfect complex concetrated in degree zero and of constant rank 1 (as we see from watching fiberwise via $j_a\colon \{a\}\times X\hookrightarrow \Spec A\times X$).

In the sequence of maps
\[
\O_{\Spec A}\stackrel{a}{\rightarrow} \rho_\ast \O_Z \stackrel{b}{\to} \underline{End}_{\rho_\ast \O_Z}(\rho_\ast \iota^\ast E) \stackrel{c}{\to} \underline{End}_{\O_{\Spec A}}(\rho_\ast \iota^\ast E)\simeq \O_{\Spec A}
\]
we have $cba=\id$, $c$ injective (being a forgetful map) and $b$ injective (as $Z=\supp E$), therefore $a$ is an isomorphism and $\rho\colon Z\stackrel{\sim}{\to} \Spec A$. Calling $f$ its inverse we have that $E\otimes p^\ast(p_\ast E)^\vee \simeq \O_{\Gamma_f}$. This proves that the map $\eta$ is an epimorphism and therefore an equivalence of stacks.

The case when $\mathcal{X}$ is a $\G_m$-gerbe over $X$ begins by considering $E\in \Perf(\Spec A\times X, 1\boxtimes \alpha)$ and remarking that the considerations made around the map $\rho$ in the diagram \ref{supp} still hold true, that is $\rho$ is an affine universal homeomorphism and in particular this implies that every trivialization of the gerbe structure on $\supp E$ is the pullback via $\rho$ of a trivialization over $\Spec A$, therefore taking an open cover trivializing $f^\ast\alpha$ we get that on every patch the gerbe is trivial and the result holds, the global statement hence follows via descent.
\end{proof}

\begin{rmk}\label{-1}
The above equivalence is an equivalence of stacks between two $\G_m$-gerbes which is not an equivalence of $\G_m$-gerbes, because the map $\mathcal{X}\to \scrM_\calP$ does not respect the action of $\mathrm{B}\G_m$\footnote{We would like to thank Bertrand To\"en for pointing this out.}. Nevertheless, when precomposing the given action by $-1$ we have a morphism of $\G_m$-gerbes which is an equivalence. In other words, the diagram
\[
\begin{tikzcd}
\mathrm{B}\G_m\times \mathcal{X} \ar[d, "-1\times\eta"'] \ar[r] & \mathcal{X} \ar[d, "\eta"]\\
\mathrm{B}\G_m\times \scrM_\calP \ar[r] & \scrM_\calP
\end{tikzcd}
\]
commutes.
\end{rmk}

\section{Moduli of point-like objects in the dg category of K3 surfaces and of cubic fourfolds}

Recall the semi-orthogonal decomposition of the derived category of a cubic fourfold $Y$ as
\[
\rmD^b(Y)=\langle \A_Y,\O_Y,\O_Y(H),\O_Y(2H)\rangle.
\]

Let us also recall that for a K3 surface $S$ one can define the transcendental lattice $T(S)$ as the orthogonal in $\H^2(S,\Z)$ to the algebraic part of the cohomology. A Brauer class $\beta\in \mathrm{Br}(S)$ can be realised as a group morphism $T(S)\to \Q/\Z$ (via pairing with a rational $B$-field lifting $\beta$, see \cite{huybrechtsstellari}) and we define the sublattice $T(S,\beta)$ as the kernel of this morphism. In particular the order of the element $\beta$ in the Brauer group is related to the discriminants of the lattices via the formula
\[
\lvert \beta \rvert^2\cdot \lvert \mathrm{disc}(T(S,\beta))\rvert = \lvert \mathrm{disc}(T(S))\rvert.
\]
The past section can be used to prove:

\begin{thm}\label{thm}
Let $Y\subset \P^5$ be a cubic fourfold, $\A_Y$ the admissible triangulated subcategory as above and $T_Y$ a dg enhancement of the latter. The following are equivalent:
\begin{enumerate}
\item there exists a twisted K3 surface $(S,\alpha)$ such that $\rmD^b(S,\alpha)\simeq \A_Y$
\item there exists a system of points $\calP$ of dimension two in $T_Y$ strongly co-generating a t-structure on $T_Y$ and $M_\calP$ is a K3 surface.
\end{enumerate}
In particular the K3 is untwisted (i.e. $\alpha=1$) if and only if $\scrM_\calP$ is the trivial $\G_m$-gerbe on $M_\calP$.
\end{thm}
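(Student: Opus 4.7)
The plan is to prove the two implications separately. The forward implication (1)~$\Rightarrow$~(2) will exploit the reconstruction statement established in the previous section for any smooth projective variety, transported through the assumed equivalence; the reverse implication (2)~$\Rightarrow$~(1) will invoke directly Theorem 7.2 of To\"en--Vaqui\'e recalled in the preliminaries.

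For (1)~$\Rightarrow$~(2): assume $\rmD^b(S,\alpha)\simeq \A_Y$ with $(S,\alpha)$ a twisted K3. Using the (essential) uniqueness of dg enhancements for twisted derived categories of smooth projective varieties and for admissible subcategories of $\rmD^b(Y)$, this triangulated equivalence lifts to a quasi-equivalence $\Perf(S,\alpha)\simeq T_Y$. The lemma in Section~2 produces on $\Perf(S,\alpha)$ a bounded system of points of dimension $2$ (since $\dim S=2$) strongly co-generating the standard t-structure, and the subsequent proposition identifies the associated moduli stack with the $\G_m$-gerbe attached to $(S,\alpha)$, whose coarse moduli is $S$; transporting the t-structure and the system of points through the quasi-equivalence gives the desired data on $T_Y$ together with $M_\calP\simeq S$, which is a K3 surface.

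For (2)~$\Rightarrow$~(1): the K3 hypothesis on $M_\calP$ ensures properness, so the hypotheses of Theorem~7.2 of To\"en--Vaqui\'e are satisfied. This produces a quasi-equivalence
\[
T_Y^{op}\simeq \Perf(M_\calP,[\scrM_\calP]).
\]
Dualising via Lemma~\ref{lemdual}, which identifies a $\beta$-twisted perfect complex with a $\beta^{-1}$-twisted one, yields $T_Y\simeq \Perf(M_\calP,[\scrM_\calP]^{-1})$; taking homotopy categories and setting $S:=M_\calP$ and $\alpha:=[\scrM_\calP]^{-1}$ gives $\A_Y\simeq \rmD^b(S,\alpha)$, a (possibly twisted) K3.

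The gerbe-triviality statement is then read off the explicit formulas above: the Brauer class $\alpha$ produced from (2) is (up to inversion and the sign twist of Remark~\ref{-1}, neither of which affects triviality of the cohomology class) the class of the $\G_m$-gerbe $\scrM_\calP\to M_\calP$, and conversely starting from (1) the proposition of Section~2 together with Remark~\ref{-1} identifies $\scrM_\calP$ with the gerbe of class $\alpha$. Hence $\alpha=1$ if and only if $\scrM_\calP$ is the trivial $\G_m$-gerbe on $M_\calP$. The main obstacle I expect lies in the careful bookkeeping of the Brauer class through the duality of Lemma~\ref{lemdual} and the sign twist of Remark~\ref{-1}, and in promoting the triangulated equivalence of (1) to a genuine dg quasi-equivalence so that the system of points and the t-structure may be transferred intact; once these identifications are pinned down, the theorem is essentially a repackaging of the two reconstruction results already at our disposal.
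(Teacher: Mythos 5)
Your two main implications follow the paper's own route: (1)$\Rightarrow$(2) via the reconstruction of Section~2 transported through the (dg-lifted) equivalence, and (2)$\Rightarrow$(1) via Theorem~7.2 of To\"en--Vaqui\'e plus the dualisation of Lemma~\ref{lemdual}, giving $T_Y\simeq \Perf(M_\calP,[\scrM_\calP]^{-1})$. That part is fine.

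The genuine gap is in the ``in particular'' statement. You claim that $\alpha=1$ if and only if $\scrM_\calP$ is trivial can be ``read off the explicit formulas'', but condition (1) is only existential: it asserts that \emph{some} twisted K3 $(S,\alpha)$ satisfies $\rmD^b(S,\alpha)\simeq \A_Y$, and $(S,\alpha)$ need not coincide with $(M_\calP,[\scrM_\calP]^{-1})$ (K3 surfaces have non-trivial Fourier--Mukai partners, and a priori different Brauer classes could give equivalent derived categories). The direction ``$\scrM_\calP$ trivial $\Rightarrow$ one may take $\alpha=1$'' is indeed immediate from $T_Y\simeq\Perf(M_\calP)$, but the converse requires ruling out the scenario in which $\A_Y\simeq\rmD^b(S)$ for an untwisted K3 $S$ while simultaneously $\A_Y\simeq\rmD^b(M_\calP,\beta)$ with $\beta=[\scrM_\calP]^{-1}$ non-trivial; your bookkeeping of the class through Lemma~\ref{lemdual} and Remark~\ref{-1} does not address this, so as written the argument is circular. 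The paper closes exactly this point by a lattice-theoretic argument from Huybrechts--Stellari: an equivalence between an untwisted and a non-trivially twisted derived category of K3 surfaces would force an isometry of transcendental lattices, which is impossible because the exact sequence
\[
0\to T(S,\beta)\to T(S)\to \Z/n\Z\to 0
\]
($n$ the order of $\beta$ in the Brauer group) makes the discriminants differ by the factor $n^2\neq 1$. Some argument of this kind (or another obstruction to such ``mixed'' equivalences) must be supplied before the triviality statement can be concluded.
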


\begin{proof}
The implication $(1)\Rightarrow (2)$ is the content of the previous section. 

For the converse we use Theorem 7.2 in \cite{toenvaquiepoints} which gives the equivalence between $T_Y^{op}$ and the dg category of perfect complexes on $M_\calP$ twisted by $[\scrM_\calP]\in H^2_{\text{ét}}(S,\G_m)$. We use Lemma \ref{lemdual} and dualize in order to get
\[
T_Y \simeq \Perf(M_\calP,[\scrM_\calP]^{-1})
\]

Now it is clear that the triviality of the gerbe $\scrM_\calP$ gives the trivial twist $\alpha=1$, while for the converse it is sufficient to remark that there is no equivalence $\rmD^b(S)\simeq \rmD^b(S,\beta)$ because of the short exact sequence involving the transcendental lattices (see \cite{huybrechtsstellari})
\[
0\to T(S,\beta) \to T(S)\to \nfrac{\Z}{n\Z}\to 0
\]
where $n$ is the order of $\beta$ in the Brauer group: having nontrivial $\beta$ would give that the discriminants of $T(S)$ and $T(S,\beta)$ differ by the constant $n^2\neq 1$ and therefore the lattices cannot be isometric. Hence the twisted and untwisted derived categories cannot be equivalent.
\end{proof}

\begin{rmk}
The fact of taking the duals in the proof of the theorem reflects the content of Remark \ref{-1}. Indeed the equivalence of stacks $\mathcal{X}\simeq \scrM_\calP$ does not induce a well defined morphism of dg categories of twisted sheaves $\Perf(S,\alpha)\simeq \Perf(M_\calP, [\scrM_\calP])$ because this is not a map of $\G_m$-gerbes. The trick of reversing the action of $\G_m$ over one of the two gerbes makes then appear the inverse of the twist in one of the two sides, giving a well defined equivalence.
\end{rmk}

\subsection{An alternative approach}\label{pt0}

Recall that for a K3 surface $S$ there is a linear map from the Grothendieck group $K_0(S)$ to the even part of the cohomology with coefficients in $\Q$ sending the class of a complex of sheaves to its Chern character (which can be easily extended on complexes, see for instance \cite{huybrechtsFM}). 

We want to characterize points on $S$ in a different manner, so we try to isolate skyscraper sheaves (and in general structure sheaves of graphs of morphisms into $S$) from a homological point of view: let $x\in S(\C)$ be a closed point and let $\O_x$ be the skyscraper sheaf  supported on $x$. The same symbol will also denote the image of this sheaf in the derived category. We will also often drop the reference to the base field $\C$.


We will therefore consider for $A\in\sCAlg$ and $E\in \Perf(S\times^h \RSpec A)$ the following conditions:
\begin{align*}
\left\{
\begin{aligned}
&\ext^i(E,E)=\ext^i(\O_x,\O_x)\qquad \mbox{for every $i\in\Z$}\\
&\ch(E)=\ch(\O_x)
\end{aligned}\right.
 \tag*{[$\spadesuit$]} \\
E\mbox{ has Tor amplitude contained in }[0,0] \tag*{[0]}
\end{align*}

Consider the full substack $\R\scrM_T^{pt,0}$ of $\R\scrM_T$ defined by setting $\R\scrM_T^{pt,0}(A)$ to be the full subsimplicial set od $\R\scrM_T(A)$ consisting of all the $E\in\R\scrM_T(A)$ satisfying [$\spadesuit$] and [$0$], for $A\in\sCAlg$.
\begin{prop}
$\R\scrM_T^{pt,0}$ is an open substack of $\R\scrM_T$.
\end{prop}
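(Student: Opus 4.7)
The plan is to verify that the two conditions $[\spadesuit]$ and $[0]$ defining $\R\scrM_T^{pt,0}$ cut out an open locus in $\R\scrM_T$, and moreover that the $\ext$ portion of $[\spadesuit]$ is redundant once the Tor amplitude and Chern character conditions are imposed. Given a morphism $\RSpec A \to \R\scrM_T$ classified by some perfect complex $E$ on $S \times^h \RSpec A$, the strategy is to inspect the locus in $\Spec A$ where each of the defining properties holds.

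First, the condition $[0]$ that $E$ has Tor amplitude contained in $[0,0]$ over $A$ defines an open locus in $\Spec A$: this is the standard semicontinuity of Tor amplitude for perfect complexes in families, as discussed in section 2.4 of \cite{toenvaquie}.

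Second, the Chern character $\ch(E)$ is a locally constant function on the base, being a discrete cohomological invariant preserved under derived base change. Consequently the condition $\ch(E) = \ch(\O_x)$ defines a locus in $\Spec A$ that is simultaneously open and closed.

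Third, I claim that the $\ext$ portion of $[\spadesuit]$ is automatically satisfied on the intersection of the previous two loci. Indeed, $[0]$ forces $E$ to be quasi-isomorphic to a coherent sheaf on $S \times \Spec A$ that is flat over $A$, so each fiber $\L j_a^* E$ is an honest coherent sheaf on $S$. If additionally $\ch(\L j_a^* E) = \ch(\O_x) = (0,0,1)$, then this fiber is a rank zero, $c_1$-trivial, length-one coherent sheaf on the K3 surface $S$, hence is necessarily isomorphic to a skyscraper $\O_{y(a)}$ at some closed point $y(a) \in S$. The $\ext$ groups of such a skyscraper depend only on the tangent space at the point, which has constant dimension $2$ on the smooth surface $S$, so $\ext^i(\L j_a^* E, \L j_a^* E) = \ext^i(\O_x, \O_x)$ for every $i$, automatically.

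Putting the three steps together, $\R\scrM_T^{pt,0}$ is the intersection in $\R\scrM_T$ of the two open substacks carved out by $[0]$ and by the Chern character condition, and is therefore itself open. The subtle step is the third: one must verify that the Chern character $(0,0,1)$ really does force the fiber to be a length-one skyscraper, which is the special feature of the K3 context that renders the $\ext$ condition automatic rather than requiring an additional semicontinuity argument of its own.
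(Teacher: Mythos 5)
Your proof is correct, but it follows a genuinely different route from the paper's. The paper first reduces to the truncated stack $\scrM_T^{pt,0}=t_0(\R\scrM_T^{pt,0})$, invoking the identification of the small Zariski/\'etale sites of a derived stack and of its truncation from \cite{schurgtoenvezzosi}, and then checks that the conditions $[\spadesuit]$ are stable under square-zero extensions: the Chern character is preserved under thickenings by topological invariance of cohomology, and the ext-conditions, rephrased as ``the trace $\Ext^0(E,E)\to k$ is an isomorphism and $\chi(E,E)=0$'', are stable under small deformations; openness of the Tor-amplitude condition $[0]$ is used implicitly (it is asserted elsewhere in the paper). You instead prove openness directly: $[0]$ is open by semicontinuity of Tor amplitude (Section 2.4 of \cite{toenvaquie}), the fibrewise Chern character of a perfect complex is locally constant on the base so the condition $\ch=\ch(\O_x)$ cuts out a clopen locus, and the ext-part of $[\spadesuit]$ is redundant on the intersection because flatness together with $\ch=(0,0,1)$ forces every fibre to be a length-one skyscraper, whose ext-groups have constant dimensions $1,2,1$; this classification of the fibres is essentially the same computation the paper performs later, in the Remark following Proposition \ref{mtpt0}, to prove surjectivity of $\eta_\C$ on closed points. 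What your route buys is an honest Zariski-local openness statement (rather than only invariance under infinitesimal deformations, which is what the paper literally verifies) together with the observation that the ext-condition is superfluous in this geometric setting; what the paper's route buys is uniformity in the conditions and, importantly, the explicit reduction from the derived stack to its truncation, a step you leave tacit: your fibrewise analysis only sees classical points, so you should record, as the paper does via \cite{schurgtoenvezzosi}, that open substacks of $\R\scrM_T$ are detected on $t_0\R\scrM_T$, and note that at non-closed points of $\Spec A$ the same skyscraper argument is to be run over the residue field.
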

\begin{proof} We will show that the truncated (underived) stack $\scrM_T^{pt,0}:=t_0(\R\scrM_T^{pt,0})$ is an open substack of $t_0(\R\scrM_T)$. The equivalence of Zariski and étale sites of $\R\scrM_T$ and $\scrM_T$ in \cite{schurgtoenvezzosi} proves that this suffices.

Chern classes are functorial, therefore for any thickening $S\times \Spec A_0 \stackrel{i}{\hookrightarrow} S\times \Spec A$ induced by a square-zero extension $A\to A_0$, and for any $\mathcal{E}\in \Perf{S\times \Spec A}$ flat over $\Spec A$ such that $E:= i^\ast \mathcal{E}$ we have $c_k(E)=c_k(i^\ast \mathcal{E})=i^\ast c_k(\mathcal{E})$ and $i^\ast\colon \H^\bullet(S\times \Spec A,\Z)\to\H^\bullet(S\times \Spec A_0,\Z)$ is an isomorphism since their underlying topological spaces are the same. The Chern character is then a function of the Chern classes hence $\ch([\mathcal{E}])=\ch([E])$ when we identify the cohomology groups of the space and its thickening.

We can rephrase the conditions on the ext's by saying that the trace from $\Ext^0(E,E)$ to $k$ is an isomorphism and that $\chi(E,E)=0$, and both these conditions are stable under small deformations in the setting of the preceding point.%
\end{proof}

The results of \cite{toenmorita} are used in \cite{toenvaquie} to calculate the homotopy groups of the simplicial set $\R\scrM_T(A)$ and this gives us 
\begin{align*}
\pi_i(\R\scrM_T^{pt,0}(A),E)\simeq \Ext^{1-i}(E,E)=0 \qquad \mbox{if i>1}\\
\pi_1(\R\scrM_T^{pt,0}(A),E)\simeq \Aut_{\Ho(T\Lotimes A-Mod)}(E,E)\simeq \C^\ast
\end{align*}
with $A\in\cdgAlg^{\leq 0}_k$, $E\in T\Lotimes A-Mod$. This implies that $\R\scrM_T^{pt,0}$ takes $1$-truncated values and can therefore be considered as a geometric $1$-stack and $\R\scrM_T^{pt,0}\to \pi_0 t_0$ $ \R\scrM_T^{pt,0}$ is a fibration on the coarse moduli space with vertex groups $\G_m$. This is obviously locally nonempty and locally connected by properties of perfect complexes. It follows that $\R\scrM_T^{pt,0}$ is a $\G_m$-gerbe over the coarse space $M_T^{pt,0}:=\pi_0 t_0$ $\R\scrM_T^{pt,0}$. 

\begin{prop}\label{mtpt0}
The map $\eta\colon S \to M_T^{pt,0}$ obtained from the unit of the adjunction \ref{adj}, factored through the truncation and coarse space of $\scrM_T^{pt,0}$, 
is an open immersion.
\end{prop}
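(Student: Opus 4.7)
The plan is to factor $\eta$ as $S \to \scrM_T^{pt,0} \to M_T^{pt,0}$ and then show that the resulting composition is both a monomorphism and étale at every closed point, so that it is an open immersion of algebraic spaces by the standard characterization.

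First I would verify the factorization explicitly. By the description of $\eta$ recalled after Remark \ref{skyscr}, an $A$-point $f\colon \Spec A \to S$ is sent to the structure sheaf $\O_{\Gamma_f}$ of the graph of $f$, which is flat over $\Spec A$ (hence of Tor amplitude $[0,0]$) and whose derived fibre at every $\C$-point $a\colon A \to \C$ is the skyscraper $\O_{f(a)}$. Remark \ref{skyscr} combined with the standard cohomological computations for skyscrapers on a smooth projective surface ensures the conditions [$\spadesuit$] and [$0$]; post-composition with the projection $\scrM_T^{pt,0} \to M_T^{pt,0}$ then produces the map $\eta\colon S\to M_T^{pt,0}$ whose nature we must analyse.

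For the monomorphism property, on $\C$-points distinct $x \neq y \in S$ produce skyscrapers $\O_x, \O_y$ with different supports, hence non-isomorphic in $\Perf(S)$. More generally, given $f\colon \Spec A \to S$, the support of $\O_{\Gamma_f}$ inside $\Spec A \times S$ is affinely (universally) homeomorphic to $\Spec A$ via the first projection, by the same support analysis used to prove the equivalence $\mathcal X \simeq \scrM_\calP$ in the previous section; composing the inverse homeomorphism with the second projection recovers $f$ from $\O_{\Gamma_f}$, showing that $\eta$ is injective on $A$-points for every $A$ and hence a monomorphism of stacks.

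Étaleness is checked on tangent spaces. For a closed point $x \in S$ the tangent space of $M_T^{pt,0}$ at $\eta(x)$ is canonically $\Ext^1_S(\O_x, \O_x)$, since passage from the $\G_m$-gerbe $\scrM_T^{pt,0}$ to its coarse moduli kills only the $\Ext^0$-direction corresponding to scalar automorphisms. For $S$ smooth of dimension two at $x$ one has $\Ext^1_S(\O_x, \O_x) \simeq T_x S$ canonically, and $d\eta_x$ is the natural Kodaira--Spencer identification, hence an isomorphism. The main subtlety lies precisely in this tangent-space computation at the coarse, truncated level: on the derived stack $\R\scrM_T^{pt,0}$ the obstruction $\Ext^2(\O_x,\O_x)\simeq\C$ (nontrivial by Serre duality on a K3) would spoil any derived smoothness statement, and it is only after truncating to $t_0$ and descending to $M_T^{pt,0}$ that the $\Ext^2$-direction ceases to affect the tangent space. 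The same phenomenon explains why the statement is confined to open immersion rather than isomorphism: $M_T^{pt,0}$ could a priori contain objects with the same numerical type as skyscrapers but lying outside the image of $\eta$, a deficit the cohomological conditions [$\spadesuit$] alone cannot rule out.
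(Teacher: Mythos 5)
Your factorization and monomorphism steps are fine and essentially agree with the paper's. The gap is in the étaleness step: the criterion \enquote{bijective on tangent spaces at closed points $\Rightarrow$ étale} is only a standard characterization when both source \emph{and} target are smooth, and the smoothness (indeed, even the infinitesimal structure) of $M_T^{pt,0}$ near $\eta(x)$ is precisely what is at stake here -- you yourself point at the obstruction space $\Ext^2(\O_x,\O_x)\simeq\C$, which a priori could make $M_T^{pt,0}$ singular or non-reduced at $\eta(x)$ in a way invisible to the Zariski tangent space over $\C[\epsilon]$. A first-order Kodaira--Spencer computation only treats square-zero extensions of $\C$ by $\C$, whereas étaleness requires the lifting property against arbitrary square-zero extensions $A\to A_0$. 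Your argument could in principle be repaired: since $\eta$ is a monomorphism locally of finite type it is unramified, hence étale-locally on the target a closed immersion, and then a completion argument (a surjection from a local ring of embedding dimension $2$ onto the regular two-dimensional $\widehat{\O}_{S,x}$ must be an isomorphism) upgrades tangent-space bijectivity to étaleness. But none of this is in your proposal, and without it the step \enquote{étaleness is checked on tangent spaces} is unjustified.

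The paper's proof proceeds differently and supplies exactly the content your outline omits: it verifies formal smoothness directly. Given a square-zero extension $A\to A_0$ and a commutative square as in the proposition, the datum is a pair $(u,E)$ with $u\colon\Spec A_0\to S$ and $E\in\Perf(\Spec A\times S)$ restricting to $\O_{\Gamma_u}$; one then shows $E$ is again the structure sheaf of a graph. This uses the result of Anel--To\"en that a perfect complex whose derived restriction to the closed fibre is a flat coherent sheaf is itself coherent and flat over $\Spec A$, then that $\R q_\ast E$ is a line bundle which can be locally trivialized to produce a map $\O_{\Spec A\times S}\to E$, surjectivity of which is checked on the fibre, giving $E\simeq\O_Z$ with $Z\to\Spec A$ an isomorphism because it is one after base change to $A_0$; this produces the required lift $w\colon\Spec A\to S$, and uniqueness follows from the monomorphism property. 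Replacing this deformation-theoretic argument by the tangent-space identification $T_{\eta(x)}M_T^{pt,0}\simeq\Ext^1(\O_x,\O_x)\simeq T_xS$ alone leaves the higher-order (and non-Artinian) liftings unaddressed, so as written the proposal does not prove the proposition. (Also, your closing speculation that $M_T^{pt,0}$ may contain objects outside the image of $\eta$ at the level of points is at odds with the paper's remark that $\eta_\C$ is surjective on closed points; the possible discrepancy is infinitesimal, not pointwise.)
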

\begin{proof}
This is an almost word by word translation in the algebraic setting of Proposition 5.6 in \cite{tvalgebrisation}.

Let $f,g\colon \Spec A \to S$ such that $\eta_A(f)= \eta_A(g)$ in $\scrM_T^{pt,0}(A)$. This means that the structure sheaf of their graphs $\Gamma(f)$ and $\Gamma(g)\subset S\times \Spec A$ are isomorphic in $\Coh(\Spec A\times S)$, at least locally, but since being equal is a local property on $\Spec A$ we can suppose that the structure sheaves of the graphs of $f$ and $g$ are isomorphic. This implies that the algebraic subspaces defined by those are equal and therefore the morphisms must be equal as well.
This means that the algebraic subspaces defined by them are equal and the map is therefore a monomorphism.

Proving that $\eta$ is formally smooth implies that it is formally étale because it is a monomorphism. Let $A\to A_0$ be a square-zero extension of commutative $\C$-algebras and let
\begin{equation}
\begin{tikzcd}\label{etasmooth}
\Spec A_0 \ar[r, "u"] \ar[d, hook, "i"'] & S \ar[d, hook, "\eta"] \\
\Spec A \ar[r, "v"'] & M_T^{pt,0}
\end{tikzcd}
\end{equation}
we want to check the existence and uniqueness of $w\colon \Spec A\to S$ making the above diagram commute. Given a morphism $u\colon \Spec A_0\to S$ we will denote by $E_0(u):= \O_{\Gamma_u}$ the structure sheaf of the graph $\Gamma_u\subset \Spec A_0\times S$ of $u$.

The data of a diagram such as (\ref{etasmooth}) is equivalent to giving a pair $(u,E)$ where $u\colon \Spec A_0\to S$ is a morphism and $E\in \Perf(\Spec A \times S)$ is such that $\L i^\ast(E)\simeq E_0(u)$ in $\Perf(\Spec A_0\times S)$.

We are therefore looking for a $w\colon \Spec A\to S$ such that $E(w):=\O_{\Gamma_u}\simeq E$. The condition $w\circ i=u$ will be automatically implied by $\eta$ being a monomorphism. 
Such an $E\in \rmD_{perf}(\Spec A \times S)$ with the property that $\L i^\ast(E)$ is coherent and flat over $\Spec A_0$ is coherent and flat over $\Spec A$ (by \cite{aneltoen}). 

Let $q\colon \Spec A \times S \to S$ be the projection, for all $a\in \Spec A$, letting $j_a\colon \{a\} \times  S\to \Spec A\times S$ the inclusion,
\[
\L j_a^\ast \R q_\ast (E)\stackrel{qis}{\simeq} \C[0]
\]
therefore $\R q_\ast(E)=: L$ is a line bundle over $\Spec A$. By restricting to sufficiently small open subsets we can suppose $L$ trivial. Therefore there is $e\colon \O_{\Spec A}\simeq \R q_\ast(E)$, which by adjunction, corresponds to an $f\colon q^\ast\O_{\Spec A}=\O_{\Spec A\times S}\to E$. Since $\L i^\ast(E)\simeq E_0(u)$, $f$ is an epimorphism at each point in $\Spec A\times S$ and therefore $E\simeq \O_Z$ for some $Z\subset \Spec A\times S$, algebraic and flat over $\Spec A$. $Z\times_{\Spec A}\Spec A_0\simeq \Gamma_u\to \Spec A_0$ is an isomorphism, therefore $Z\to \Spec A$ is an isomorphism. Hence there exists $w\colon \Spec A\to S$ whose graph is $Z$ and we have $E(w)\simeq \O_Z\simeq E$.
\end{proof}

\begin{rmk} We observe that on closed points the map $\eta_\C\colon S(\C)\to M_T^{pt,0}(\C)$ is surjective because taking $E\in \Coh(S)$ with the prescribed Chern character gives (for $L=i^\ast(\O(1))$, $i\colon S\hookrightarrow \P^m$)
\[
P_E(n)=\chi (S,E(n))=\int_S \ch(E\otimes L^n). \td(S)\equiv 1
\]
therefore $E$ is supported in dimension zero i.e. on a finite number of points, and with 1-dimensional global sections. That is exactly a skyscraper supported over one closed point.

In particular we also have $M_\calP(\C)\simeq M_T^{pt,0}(\C)$, so they have the same closed points but their infinitesimal information might be different.
\end{rmk}

\begin{rmk} In this work we mostly stuck to the case where $\A_Y$ is already known to be geometric, but the ultimate goal would be having a general treatment of those K3 categories independently of their geometric incarnation. This however clashes with some obstacles, first of all that the general $\A_Y$ does not contain enough point objects. Another big problem is that it is not known to date (to the best of the author's knowledge) if $\A_Y$ possesses a unique dg enhancement, as is the case for most derived categories of schemes, and this would make more difficult to develop the theory. Last but not least, another fundamental ingredient would be missing in the non-geometric case but it is only recently that a non-trivial t-structure has been claimed to have been produced on $\A_Y$ by Bayer, Lahoz, Macrì and Stellari.
\end{rmk}

\vspace{1.5cm}

%
\vspace{1.5cm}
\flushleft
Martino Cantadore\\
\vspace{.1cm}
\texttt{cantadore@mat.uniroma1.it}\\
\vspace{.2cm}
\small{\textsl{Dipartimento di Matematica "G. Castelnuovo",\\
 Università di Roma "La Sapienza"}\\
Piazzale A. Moro 5, 00185, Roma, Italy}\\

\end{document}